\documentclass[12pt]{article}

\usepackage{graphicx}
\usepackage{latexsym,amssymb}
\usepackage{amsthm}
\usepackage{indentfirst}
\usepackage{amsmath}
\usepackage{color}
\usepackage{xcolor}
\usepackage{fourier}
\usepackage[colorlinks=true,backref=page]{hyperref}
\usepackage[all]{xy}

\usepackage[a4paper,text={160true mm,230true mm},centering,top=35true mm,head=5true mm,headsep=2.5true mm,foot=8.5true mm]{geometry}

\newtheorem{theoremalph}{Theorem}

\newtheorem*{Main Theorem}{Main Theorem}

\newtheorem{Theorem}{Theorem}[section]
\newtheorem*{Theorem A}{Theorem A}
\newtheorem*{Theorem A'}{Theorem A'}
\newtheorem*{Theorem B'}{Theorem B'}

\newtheorem*{Conjecture}{Conjecture}

\newtheorem{Proposition}[Theorem]{Proposition}
\newtheorem{Lemma}[Theorem]{Lemma}

\newtheorem{Remark}[Theorem]{Remark}

\newtheorem{Remark-numbered}[Theorem]{Remark}
\newtheorem{Corollary}[Theorem]{Corollary}

\newtheorem{Claim-numbered}[Theorem]{Claim}

\def\HH{{\mathbb H}}

 \def\NN{{\mathbb N}} 
\def\PP{{\mathbb P}}

  \def\cH{{\cal H}}  
\def\cC{{\cal C}}    
   \def\cP{{\cal P}} 
\def\cE{{\cal E}}   \def\cQ{{\cal Q}} 
\def\cF{{\cal F}}  \def\cL{{\cal L}} \def\cR{{\cal R}}

\def\Id{\operatorname{Id}}

\def\dim{\operatorname{dim}}

\def\diam{\operatorname{Diam}}

\begin{document}

\title{Symbolic extension of $\mathcal{C}^{1,\alpha}$ maps in arbitrary dimensions}

\author
{Chiyi Luo and Dawei Yang\footnote{
	D. Yang  was partially supported by National Key R\&D Program of China (2022YFA1005801), NSFC (12325106 \& 12526207), ZXL2024386 and Jiangsu Specially Appointed Professorship.
	C. Luo was partially supported by NSFC (12501244) and Early-Career Young Scientists and Technologists Project of Jiangxi Province. }
}

\date{}
\maketitle

\begin{abstract} 
We prove that every $\mathcal{C}^{1,\alpha}$ self-map of a compact Riemannian manifold, in arbitrary dimension, admits a symbolic extension.
This gives a positive answer to the conjecture of Downarowicz and Newhouse in arbitrary dimensions.
Our method is based on analyzing the small singular values of the linearized orbit operator.
\end{abstract}

\tableofcontents	

\section{Introduction}\label{SEC:1}
A symbolic extension of a topological dynamical system $(M,f)$ is a subshift $(\Sigma,\sigma)$ over a finite alphabet $\mathcal A$ together with a continuous surjective map $\pi:\Sigma\to M$ satisfying $\pi\circ\sigma=f\circ\pi$.
Here, $\Sigma\subset\mathcal A^{\mathbb Z}$ is a closed subset satisfying $\sigma(\Sigma)\subset\Sigma$ and $\sigma$ denotes the left shift map.

Finite-alphabet subshifts are a well-understood class of dynamical systems with a concrete combinatorial description. 
If a topological dynamical system admits a symbolic extension, then it is a topological factor of a finite-alphabet subshift. 
Thus, its dynamics can be represented by a finite-alphabet symbolic model.
Moreover, topological entropy does not increase under factor maps, so the entropy of the original system is bounded above by that of the finite-alphabet subshift and is therefore finite.

However, finite topological entropy alone does not guarantee the existence of a symbolic extension.
In 1990, Boyle constructed a zero dimensional finite entropy counterexample with no symbolic extension; see \cite[Example 3.1]{BFF02} for a record of this example.
This naturally raises the question of whether differentiability rules out such obstructions.
The answer remains negative in the $\mathcal C^1$ category: although every $\mathcal C^1$ self-map of a compact Riemannian manifold has finite topological entropy, Downarowicz-Newhouse \cite{DN05} proved that generic non-Anosov $\mathcal C^1$ area-preserving surface diffeomorphisms admit no symbolic extensions.

This leads to two fundamental questions: which systems admit symbolic extensions, and, when one exists, what is the smallest possible gap between the entropy of the symbolic extension and that of the original system?
A symbolic extension with zero entropy gap is called principal.
It preserves the entropy of every invariant measure: each invariant measure on the symbolic system has the same entropy as its projection onto the original system. Consequently, a principal symbolic extension adds no entropy and has the same topological entropy as the original system.

By contrast, the situation is much better in the $\cC^\infty$ category. 
Based on the Gromov-Yomdin theory \cite{Gro87,Yom87}, Buzzi \cite{Buz97} proved that every $\cC^\infty$ self-map of a compact Riemannian manifold is asymptotically $h$-expansive. 
Combined with the symbolic extension theory for asymptotically $h$-expansive systems (see Boyle-Fiebig-Fiebig \cite{BFF02} and Downarowicz \cite{Dow05}), this implies that every such map admits a principal symbolic extension.
The sharp contrast between the $\cC^1$ and $\cC^\infty$ categories raises a natural question: how much regularity beyond $\cC^1$ is sufficient to guarantee the existence of a symbolic extension? 
Motivated by this question, Downarowicz-Newhouse formulated the following conjecture.

\begin{Conjecture}[Conjecture 1.1, \cite{DN05}]
	Every $\cC^r$ self-map with $r\geq 2$ of a compact Riemannian manifold has a symbolic extension.
\end{Conjecture}

Since then, a stronger $\cC^{1,\alpha}$ version of this conjecture has been established for interval and circle maps by Downarowicz-Maass \cite{DM09}, for maps on surfaces by Burguet \cite{Bur11,Bur12}, and for three-dimensional diffeomorphisms by Burguet-Liao \cite{BL2022}.

Although these results guarantee the existence of symbolic extensions, such extensions cannot in general be chosen to be principal at finite smoothness. Indeed, Downarowicz-Newhouse showed that, for every finite $r\geq2$, there are open sets of $\cC^r$ surface diffeomorphisms in which a residual subset has positive residual entropy and hence admits no principal symbolic extension \cite{DN05}. 
On the other hand, additional hyperbolic structure can ensure the existence of principal symbolic extensions. 
For instance, uniformly hyperbolic systems admit principal symbolic codings via Markov partitions \cite{Bow75}.

Let $f: M \to M$ be a continuous map on a compact metric space $M$ with finite topological entropy.
Denote by $h_{\rm top}(\cdot)$ the topological entropy. 
Define the symbolic extension entropy  
$$h_{\rm sex}(f):=\inf\big\{h_{\rm top}(\sigma):(\Sigma,\sigma)~\text{is a symbolic extension of}~(f,M)\big\},$$
with the convention that $h_{\rm sex}(f)=\infty$ if no such symbolic extension exists. 

Downarowicz-Newhouse formulated the following quantitative problem concerning an estimate of $h_{\rm sex}(f)$, motivated by the estimates of Gromov \cite{Gro87} and Yomdin \cite{Yom87}.
\begin{Conjecture}[Conjecture 1.2, \cite{DN05}]
	Every $\cC^r$ self-map $f:M\to M$ with $r\geq 2$ of a compact Riemannian manifold $M$ has a symbolic extension, and 
	$$h_{\rm sex}(f) \leq \frac{rd}{r-1} \cdot \lim_{n\to \infty}\frac{1}{n}\log^{+}\big(\sup_{x\in M}\|D_xf^n\|\big).$$
\end{Conjecture}
This conjecture was established for interval and circle maps by Downarowicz-Maass \cite{DM09}, for surface maps by Burguet \cite{Bur11,Bur12}, and for three-dimensional diffeomorphisms by Burguet-Liao \cite{BL2022}.
In dimension one, Burguet \cite{Bur10} constructed examples showing that this bound is sharp, and for surface diffeomorphisms, he \cite{Bur11} also obtained a sharp estimate for the entropy gap.
It is worth noting that all previously known results on the existence of symbolic extensions and entropy boundedness for general finite-regularity maps, without additional hyperbolic assumptions, are restricted to manifolds of dimension at most three.
This restriction arises from existing reparametrization arguments, which reduce local entropy to the growth of curves in dimensions two and three, but fail to control higher-dimensional disks in the absence of the $\cC^\infty$ assumption.

We now state our main result. 
We establish the existence of symbolic extensions and provide an entropy bound for $\cC^{1,\alpha}$ self-maps of compact Riemannian manifolds in arbitrary dimensions.

\begin{theoremalph}\label{Thm:Main-SE}
	Let $M$ be a $d$-dimensional compact Riemannian manifold. 
	Then, every $\mathcal C^{1,\alpha}$ map $f:M\to M$ with $\alpha\in (0,1]$ admits a symbolic extension, and  
	$$h_{\rm sex}(f)\leq  h_{\rm top}(f)+ \frac{d}{\alpha}\cdot R(f),~~\text{where}~~R(f):=\lim_{n\to \infty}\frac{1}{n}\log^{+}\big(\sup_{x\in M}\|D_xf^n\|\big).$$
	Moreover, there is a symbolic extension $\pi:(\Sigma,\sigma)\to(M,f)$ such that for any $f$-invariant measure $\mu$,
	$$\max\big\{h_{\nu}(\sigma): \nu~\text{is}~\sigma\text{-invariant and}~\pi_{\ast}(\nu)=\mu \big\}=h_{\mu}(f)+\frac{1}{\alpha} \cdot \lambda_{\Sigma}^+(\mu,f).$$
\end{theoremalph}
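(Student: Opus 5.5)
The plan is to use the Boyle--Downarowicz theory of entropy structures. By the Symbolic Extension Entropy Theorem, it suffices to build a superenvelope of the entropy structure of $f$, or equivalently an affine upper semicontinuous function $E$ on the simplex of invariant measures with $E\ge h$ and $E-h_k$ upper semicontinuous for a suitable entropy structure $(h_k)$. The explicit formula in the statement shows which function to aim for. We take $E(\mu)=h_\mu(f)+\frac1\alpha\lambda_\Sigma^+(\mu,f)$, where $\lambda_\Sigma^+(\mu,f)$ denotes the sum of the positive Lyapunov exponents. This $\lambda^+_\Sigma$ is upper semicontinuous: it equals $\inf_n\frac1n\int\log\|\Lambda D_xf^n\|\,d\mu$, an infimum of continuous functions. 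The key local estimate we need is a tail-entropy bound: for $\epsilon$ small,
\begin{equation*}
h_\mu(f)-h_\mu(f,\mathcal P_\epsilon)\le \frac1\alpha\,\lambda^+_\Sigma(\mu,f)+o(1)\quad\text{as }\epsilon\to0,
\end{equation*}
and it should hold uniformly along sequences of measures. Concretely, we would prove $\limsup_{\nu\to\mu} (h_\nu - h_\nu^{\epsilon}) \le \frac1\alpha(\lambda_\Sigma^+(\mu)-\lambda^+_\Sigma(\nu))$ plus a small error. With this bound, $E$ is a superenvelope, the minimality of the superenvelope yields the stated maximum over lifts, and integrating with the variational principle gives $h_{\rm sex}\le h_{\rm top}+\frac d\alpha R(f)$, since $\lambda^+_\Sigma\le dR(f)$.

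To prove the local estimate, we count the $(n,\delta)$-separated points inside a Bowen ball $B_n(x,\epsilon)$. Following the abstract, we do not reparametrize disks. Instead we linearize the orbit: in exponential charts along the orbit $x,\dots,f^{n-1}x$, the condition $y\in B_n(x,\epsilon)$ becomes a nonlinear system $v_{i+1}=D_{f^ix}f\,v_i+N_i(v_i)$, with $|N_i(v)|\le C|v|^{1+\alpha}$ by $\mathcal C^{1,\alpha}$. The linearized orbit operator $T:(v_i)\mapsto(v_{i+1}-Df\,v_i)$ acts on $\ell^\infty$ sequences. The Bowen ball at scale $\delta$ is then contained in the preimage under $T$ of a ball of radius $C\epsilon^{1+\alpha}$. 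The number of $\delta$-separated points is controlled by the covering number of that preimage set, which in turn is controlled by the product of $\epsilon^{1+\alpha}/\delta$ over the small singular values of $T$ (those below $\epsilon^\alpha$), in a finite-window, block version.

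The main step is to bound the count of small singular values. Via the Oseledets/Ruelle-type singular value comparison, singular values of $T$ of size $\le\epsilon^{\alpha}$ correspond to directions where the cocycle expands beyond $\epsilon^{-\alpha}$ within short time windows. Equivalently, the number of such values along an orbit segment is controlled by $\sum_i \log^+\|\Lambda Df^{k}\|/(\alpha\log(1/\epsilon))$. Multiplying this by the $\log(1/\epsilon)$ cost per small singular value gives exactly $\frac1\alpha\lambda^+_\Sigma$. We expect this to be the main obstacle. It requires uniform (non-asymptotic) singular value estimates for the block operator over windows of length $k$ chosen with $\epsilon$, and it requires the nonlinearity to be absorbed without degrading the exponent $\frac1\alpha$. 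We would first try a blockwise Schur-complement argument on windows of length $k\approx\log(1/\epsilon)/(\alpha\cdot\text{Lip})$. After that, the estimate is passed to measures by a Misiurewicz-type argument, and the upper semicontinuity gap is handled by choosing $k$ large.
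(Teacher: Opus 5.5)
Your top-level architecture matches the paper's. You verify Burguet's form of the symbolic extension theorem with $\lambda=\lambda^+_\Sigma/\alpha$, using a uniform tail estimate built from the linearized orbit operator and then passed to $f^q$. The gap is that your mechanism produces the wrong tail estimate.

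Your first display, $h_\mu(f)-h_\mu(f,\mathcal P_\epsilon)\le\frac1\alpha\lambda^+_\Sigma(\mu,f)$, holds trivially for \emph{every} $\mathcal C^1$ map by Ruelle's inequality, since $\alpha\le1$. In particular it holds for the Downarowicz--Newhouse maps, which have no symbolic extension. From it you only get $\limsup_{\nu\to\mu}\bigl(h_\nu-h^\epsilon_\nu+\frac1\alpha\lambda^+_\Sigma(\nu)\bigr)\le\frac2\alpha\lambda^+_\Sigma(\mu)$, so ``with this bound $E$ is a superenvelope'' is false. What is needed is the defect form
$h_\nu(f)-h_\nu(f,\mathcal Q)\le\frac1\alpha\bigl(\frac1q\int\Phi_q\,d\nu-\lambda^+_\Sigma(\nu,f)\bigr)+\delta_q$, uniformly in $\nu$, where $\Phi_q=\max_k\log^+\|\wedge^kDf^q\|$. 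Your ``concretely'' line points at this. But counting small singular values through $\sum\log^+\|\wedge Df^k\|$ only yields the first display.

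The heuristic behind that count is also wrong: expansion does not create small singular values. For $A_k\equiv2\,\mathrm{Id}$ or $A_k\equiv\mathrm{diag}(2,\frac12)$, every nonzero singular value of $\mathcal L$ is $\ge\frac12$, uniformly in $n$. Only non-coherent growth produces small ones. The missing lemma is
\[
\sum_{j=1}^{nd}\log^+\sigma_j(\mathcal L)^{-1}\le nd+\sum_{k=0}^{n-1}\Phi(A_k)-\Phi(A_{n-1}\cdots A_0).
\]
The paper proves it by pure linear algebra, with no Oseledets, Pesin or windowing argument:
the left side is at most $\frac12\log\det(\mathrm{Id}+\mathcal L\mathcal L^T)-\frac12\log\det(\mathcal L\mathcal L^T)$;
the first term is bounded by Hadamard--Fischer applied to the block-tridiagonal matrix $\mathcal L\mathcal L^T$;
for the second, $\det(\mathcal L\mathcal L^T)\ge\det(\mathcal L|_E)^2=e^{2\Phi(A_{n-1}\cdots A_0)}$, where $E=F_0^+\oplus E_1\oplus\cdots\oplus E_{n-1}\oplus(F_n^+)^\perp$, $F_0^+$ is the expanding right-singular space of $A_{n-1}\cdots A_0$, and $F_n^+$ is its image.
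Applied to $f^q$, divided by $nq$ and integrated, this gives exactly the defect, up to errors vanishing as $q\to\infty$.

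Second, your covering step is single-scale, and that cannot work. The entropy argument needs $\epsilon$ fixed and $\delta\to0$. Resolving the container $\{\|v\|\le\epsilon,\ \|\mathcal Lv\|\le C\epsilon^{1+\alpha}\}$ to scale $\delta$ costs something in every direction with $\sigma_j\lesssim\epsilon^{1+\alpha}/\delta$. Along a hyperbolic orbit that is all $nd$ directions, so the exponential rate in $n$ grows like $d\log(\epsilon^{1+\alpha}/\delta)$. Even in the genuinely small directions the cost is $\log(\epsilon/\delta)$, not $\log(1/\epsilon)$.

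The factor $\frac1\alpha$ must instead come from re-linearizing at every scale. If a piece is $\rho$-small in the mean metric and $\rho^\theta$-small in the Bowen metric, then $\|\mathcal L_xv\|\le H\rho^{\alpha\theta}\cdot\rho$. So refining it to mean size $\rho e^{-\lambda}$ costs only $\log(1+8e^\lambda)$ per kernel direction ($d$ of them) and per singular value $\le4He^\lambda\rho^{\alpha\theta}$. Summing over $\rho=re^{-s\lambda}$, each $\sigma_j$ is paid in about $\log^+\sigma_j^{-1}/(\alpha\lambda\theta)$ scales. The total is roughly $\frac1{\alpha\theta}\sum_j\log^+\sigma_j^{-1}$, plus $O(nd\lambda)$, plus an $n$-independent term of order $d\log(\epsilon/\delta)$; that last term is where $\delta$ ends up.

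This iteration needs an inner product, which the $\ell^\infty$ Bowen norm lacks. One works with the mean metric $\overline d_n$ (weights $\frac1{n+1}$). At each scale one re-splits $\overline d_n$-small pieces of size $r$ into $d_n$-small pieces of size $r^\theta$, at cost $(n+1)\bigl(\mathbb H(r^{1-\theta})+dr^{1-\theta}\log(1+r^{-\theta})\bigr)$. This step is needed both to bound the nonlinearity coordinatewise and to produce the final Bowen cover. For $\theta=1-\lambda^{-1}$ it is summable over scales with total $O(nd\lambda)$, which is negligible after passing to $f^q$ with $\lambda=\sqrt q$.

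Finally, the defect varies inside a Bowen ball. So the covering bound has to be localized to the level sets $\{\Delta^{f^q}_n\in[k,k+1)\}$ before it can be integrated against $\mu$.
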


Here, $\lambda_{\Sigma}^+(\mu,f)$ denotes the integral of the sum of the positive Lyapunov exponents associated with the $f$-invariant measure $\mu$ (see Section \ref{SEC:Entropy}), and $h_\mu(f)$ denotes the metric entropy of $\mu$ with respect to $f$.

Theorem \ref{Thm:Main-SE} provides an affirmative answer to \cite[Conjecture 1.1]{DN05} and strengthens it by showing that $\cC^{1,\alpha}$ regularity already suffices for the existence of symbolic extensions.
Moreover, by the Ruelle inequality \cite{Rue78} and the variational principle \cite[Theorem 8.6]{Wal82}, the topological entropy of every $\cC^1$ map $f$ is bounded above by $dR(f)$.
Therefore, Theorem \ref{Thm:Main-SE} confirms the $\mathcal C^{2}$ version of \cite[Conjecture 1.2]{DN05} and, in fact, extends it to $\cC^{1,\alpha}$ maps.

The proof of Theorem \ref{Thm:Main-SE} relies on the symbolic extension theorem (see the version stated for continuous maps in \cite[Theorem 3]{Bur12}; see also \cite[Theorem 5.5]{BD04} and \cite[Chapter 9]{Dow11}) and the uniform estimate between metric entropy and partition entropy (see our Theorem \ref{Thm:Main-Entropy}).

We first recall the symbolic extension theorem.
For convenience, denote by $\mathbb{P}_{f}(M)$ the set of all $f$-invariant probability measures.
The symbolic extension theorem is formulated in terms of entropy structures (see \cite{Dow05} and Appendix \ref{SEC:A}).
For our purpose, we fix the entropy structure defined as follows.
Let $\{\varepsilon_n\}_{n\geq1}$ be a decreasing sequence of positive numbers converging to zero, and define
$$ h_n(\mu):=\inf\left\{h_\mu(f,\mathcal Q):\diam(\mathcal Q)<\varepsilon_n\right\}, \quad \mu\in\mathbb P_f(M),$$
where the infimum is taken over all finite Borel partitions $\mathcal Q$ of $M$.
Then $\{h_n\}_{n\geq1}$ is an entropy structure, since it is uniformly equivalent to the continuous-function entropy structure (see Appendix \ref{SEC:A}). 
We state the theorem with respect to this entropy structure; the original version is stated in Theorem \ref{Thm:A.1}.

\begin{Theorem}[Symbolic extension theorem] \label{Thm:1.1}
	Let $f:M\to M$ be a continuous map with finite topological entropy.
	Assume that $\lambda(\cdot)$ is a non-negative affine upper semi-continuous function defined on $\PP_{f}(M)$, and that $\{\varepsilon_n\}_{n\ge1}$ is a decreasing sequence converging to zero. 
	If for every $\gamma>0$ and every $\mu\in \PP_{f}(M)$, there exists $n_{\mu}>0$ for which
	$$\limsup_{\nu \to \mu} \Big(h_{\nu}(f)- \inf \big\{h_{\nu}(f,\cQ):\diam (\cQ)<\varepsilon_n\big\}+\lambda(\nu)\Big)\leq \lambda(\mu)+\gamma,~\forall n\geq n_{\mu}.$$
	Then, there exists a symbolic extension $\pi:(\Sigma,\sigma)\to (M,f)$ such that 
	$$\max\big\{h_{\nu}(\sigma): \nu~\text{is}~\sigma\text{-invariant and}~\pi_{\ast}(\nu)=\mu \big\}=h_{\mu}(f)+\lambda(\mu).$$
	In particular, $h_{\rm sex}(f)\leq h_{\rm top}(f)+\sup\big\{\lambda(\mu):\mu\in \PP_{f}(M)\big\}$.
\end{Theorem}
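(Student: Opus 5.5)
This is a reformulation of the Boyle–Downarowicz symbolic extension theorem (Theorem \ref{Thm:A.1}), so the plan is a reduction rather than a new argument. Theorem \ref{Thm:A.1} says: for any entropy structure $\{h_k\}$ and any affine superenvelope $E$ of $\{h_k\}$, there is a symbolic extension $\pi$ whose extension entropy function $h^\pi_{\rm ext}(\mu)=\max\{h_\nu(\sigma):\pi_*\nu=\mu\}$ equals $E(\mu)$. Recall that $E$ is a superenvelope of $\{h_k\}$ when $E\ge h$ and, for each $k$, the function $E-(h-h_k)$ is upper semicontinuous, i.e.\ $\widetilde{(E-h+h_k)}=E-h+h_k$. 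The work consists of three steps.

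\textbf{Step 1.} Fix the structure $h_n(\mu)=\inf\{h_\mu(f,\cQ):\diam(\cQ)<\varepsilon_n\}$ and confirm it is an entropy structure. This is asserted in the text via uniform equivalence with the continuous-function entropy structure (Appendix \ref{SEC:A}). Uniform equivalence preserves both superenvelopes and the property of being an affine superenvelope. This lets us verify the superenvelope condition for this particular $\{h_n\}$.

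\textbf{Step 2.} Set $E:=h+\lambda$. This is affine, since $h$ and $\lambda$ are affine on $\PP_f(M)$, and $E\ge h$ because $\lambda\ge0$.

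\textbf{Step 3.} Write $\tau_n:=h-h_n\ge 0$. The hypothesis reads
$$\limsup_{\nu\to\mu}\big(\tau_n(\nu)+\lambda(\nu)\big)\le\lambda(\mu)+\gamma \quad\text{for } n\ge n_\mu(\gamma).$$
By the characterization of superenvelopes in \cite{Dow05}, it suffices to show
$$\lim_{n}\widetilde{(\lambda+\tau_n)}\le\lambda,$$
where $\widetilde{\,\cdot\,}$ denotes the upper semicontinuous envelope. Then $E$ is a superenvelope.

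To get this, take the u.s.c.\ envelope of $\lambda+\tau_n$ at $\mu$. It equals $\max\{\lambda(\mu)+\tau_n(\mu),\ \limsup_{\nu\to\mu}(\lambda+\tau_n)(\nu)\}$. The second term is at most $\lambda(\mu)+\gamma$ by hypothesis. The first tends to $\lambda(\mu)$, because $h_n(\mu)\uparrow h_\mu(f)$ as the partition diameter shrinks.

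Letting $n\to\infty$ and then $\gamma\to0$ gives $\lim_n\widetilde{(\lambda+\tau_n)}\le\lambda$. The limit exists because $\tau_n$ is nonincreasing in $n$ (as $\varepsilon_n$ decreases). The standard equivalence gives the u.s.c.\ requirement: $\lim_n\widetilde{(E-h_n)}-\dots$, i.e.\ $E-h+h_k$ is u.s.c.\ for all $k$ if and only if $\lim_k\widetilde{(E-h)+\tau_k}$ is dominated appropriately, using that $\lambda$ is itself u.s.c.

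Once $E$ is an affine superenvelope, Theorem \ref{Thm:A.1} produces $\pi$ with $h^\pi_{\rm ext}=h+\lambda$, which is the displayed equality. The bound on $h_{\rm sex}(f)$ then follows from the variational principle for the subshift:
$$h_{\rm top}(\sigma)=\sup_\mu h^\pi_{\rm ext}(\mu)\le h_{\rm top}(f)+\sup\lambda.$$

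\textbf{Main obstacle.} I expect the delicate point to be the passage from the pointwise, $\mu$-dependent $n_\mu$ in the hypothesis to the superenvelope condition. That condition is a statement about the functions $E-h+h_k$ being upper semicontinuous for \emph{every} $k$, not just for large $k$. My first attempt is the monotonicity of $\tau_n$ in $n$. Given $k$ and $\mu$, choose $n\ge\max(k,n_\mu)$. Then
$$E-h+h_k=\lambda+\tau_k\le\lambda+\tau_n+(h_n-h_k).$$
Here $h_n-h_k$ is a difference of u.s.c.-type terms that needs care. If this fails, I would invoke Downarowicz's criterion directly: $E$ is a superenvelope if and only if $\lim_k\widetilde{(E-h_k)}=E$. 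This is exactly what the computation in Step 3 verifies, after rewriting $E-h_k=h_k^{\,c}$-style terms.
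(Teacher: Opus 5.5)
Your route is valid but differs from the paper's. The paper does no superenvelope computation. Its Theorem \ref{Thm:A.1} (Burguet's Theorem 3) already has the $\limsup$ condition as its hypothesis, with the $\limsup$ taken only over ergodic $\nu$. So Theorem \ref{Thm:1.1} follows from three facts:
\begin{itemize}
\item Proposition \ref{Prop:AP}: the partition-based $\{h_n\}$ is uniformly equivalent to $\mathcal H^{\rm F}$;
\item the hypothesis transfers along uniform equivalence at the cost of an extra $\gamma$;
\item a $\limsup$ over all $\nu$ dominates one over ergodic $\nu$.
\end{itemize}
You go one level down, to the Boyle--Downarowicz characterization: finite affine superenvelopes of the entropy structure are exactly the functions $h^\pi_{\rm ext}$. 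You then check directly that $E=h+\lambda$ is such a superenvelope. In effect you reprove Burguet's criterion in the easy case where the hypothesis controls all $\nu$. This shows transparently why the hypothesis is the right one. The cost is that you need the correct general definition of a superenvelope, and that is where your write-up is shaky. Also, what you call Theorem \ref{Thm:A.1} is not the statement in the paper's appendix; what you quote is the Boyle--Downarowicz theorem.

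\textbf{Corrections.}
\begin{itemize}
\item The condition is that $E-h_k=\lambda+\tau_k$ be u.s.c., not $E-(h-h_k)=\lambda+h_k$.
\item Your identity $E-h+h_k=\lambda+\tau_k$ is false.
\item Downarowicz's criterion reads $\lim_k\widetilde{(E-h_k)}=E-h$, not $=E$.
\end{itemize}
The form ``u.s.c.\ for every $k$'' is the definition only for structures whose differences $h_m-h_k$ are u.s.c. There it is equivalent to the limit form: use $\widetilde{(\lambda+\tau_k)}\le\widetilde{(\lambda+\tau_m)}+(h_m-h_k)$ and let $m\to\infty$. For your partition-based $\{h_n\}$ those differences are not known to be u.s.c. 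So your monotonicity attempt in the ``main obstacle'' paragraph cannot work and should be dropped. The limit form is exactly what you verified in Step 3, and it is invariant under uniform equivalence: $h'_\ell>h_k-\gamma$ gives $\widetilde{(\lambda+\tau'_\ell)}\le\widetilde{(\lambda+\tau_k)}+\gamma$.

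Step 3 also uses $h_n(\mu)\uparrow h_\mu(f)$. This holds because $h_\mu(f)<\infty$ and small-diameter $\mathcal Q$ make $H_\mu(\mathcal P|\mathcal Q)$ small; alternatively, because $\{h_n\}$ is an entropy structure. Finally, $E$ is finite because $\lambda$ is u.s.c.\ on a compact set. With these corrections your argument is a complete proof.
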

Next, we state Theorem \ref{Thm:Main-Entropy}, which establishes a uniform approximation of metric entropy by partition entropy using uniform partitions for $\mathcal C^{1,\alpha}$ maps $f:M\to M$, $\alpha\in(0,1]$.
The notation $\|f\|_{\cC^{1,\alpha}}$ will be introduced in Section \ref{SEC:Entropy}, and $\wedge^k$ denotes the $k$-th exterior power operator.
\begin{theoremalph}\label{Thm:Main-Entropy}
	Let $M$ be a $d$-dimensional compact Riemannian manifold. 
	For each $\alpha\in (0,1]$, $\Upsilon>1$ and $q\in\mathbb N$, there exist $\varepsilon_q=\varepsilon_{q}(\alpha,\Upsilon)>0$ and $\delta_q=\delta_{q}(\alpha,d,\Upsilon)>0$ such that 
	\begin{itemize}
		\item  $\delta_{q}(\alpha,d,\Upsilon)\to 0$ as $q\to \infty$;
		\item  for every $\mathcal{C}^{1,\alpha}$ map $f:~M\to M$ satisfying $\|f\|_{\mathcal C^{1,\alpha}}\leq \Upsilon$;
		\item  for every invariant measure $\mu$ of $f$ and every finite partition $\mathcal Q$ with ${\rm Diam}(\mathcal Q)<\varepsilon_q$;
	\end{itemize}
	one has 
	$$h_\mu(f)\le h_{\mu}(f,\cQ)+\frac{1}{\alpha}\Big(\frac{1}{q}\int \max_{1\leq k\leq d}~\log^{+} \|\wedge^k D_xf^q\|~{\rm d}\mu(x)-\lambda^{+}_{\Sigma}(\mu,f)\Big)+\delta_{q}.$$
\end{theoremalph}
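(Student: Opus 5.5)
Since $h_\mu(f)\le h_\mu(f,\cQ)+h_\mu(f\mid\cQ)$, with $h_\mu(f\mid\cQ)$ bounded by the conditional (tail) entropy, Theorem \ref{Thm:Main-Entropy} reduces to a uniform estimate on the local entropy left inside atoms of the iterated partition $\cQ^{n}=\bigvee_{j<n}f^{-j}\cQ$. By the Ledrappier--Young / Downarowicz tail-entropy formalism, it suffices to prove the following. Fix $q$, work with $g=f^q$, and count $(m,\rho)$-separated sets (for $g$, with $\rho\to0$ at the end) inside a Bowen ball $B_g(x,m,\varepsilon_q)$. We want the logarithmic count to be at most
\[
\frac{1}{\alpha}\sum_{i<m}\Big(\max_k\log^+\|\wedge^k D_{g^ix}g\|-\text{(positive exponents along the orbit)}\Big)+m\,q\,\delta_q
\]
for $\mu$-typical $x$. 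Dividing by $mq$ and using Birkhoff averages together with Oseledets (the $\lambda^+_\Sigma$ term is recovered as the limit of $\frac1{qm}\log\|\wedge^{k}D_xf^{qm}\|$ for the appropriate $k$) then gives the statement, uniformly in $\|f\|_{\cC^{1,\alpha}}\le\Upsilon$.

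The local count will come from the abstract's idea. In exponential charts, a point $y\in B_g(x,m,\varepsilon)$ corresponds to a sequence $(v_i)_{i<m}$ with $|v_i|\le\varepsilon$ solving $v_{i+1}=D_{g^ix}g\,v_i+r_i(v_i)$, where $|r_i(v)|\le C\Upsilon|v|^{1+\alpha}$ by the $\cC^{1,\alpha}$ bound. Define the linearized orbit operator $L:(v_i)\mapsto(v_{i+1}-A_iv_i)$ on $(\mathbb R^d)^m$, with $A_i=D_{g^ix}g$. Then the orbit set lies in $\{v:\|Lv\|_\infty\lesssim\varepsilon^{1+\alpha}\}\cap\{\|v\|_\infty\le\varepsilon\}$. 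Covering this set by $\rho$-balls splits into two parts. The number of singular values of $L$ below $\varepsilon^{\alpha}$ (the ``small singular values'') determines how many directions are only constrained by $\varepsilon$. Directions with singular value $s\ge\varepsilon^\alpha$ are confined to width $\varepsilon^{1+\alpha}/s$. Iterating at scales $\varepsilon,\varepsilon^{1+\alpha},\dots$ down to $\rho$ (a multiscale or renormalization scheme) gives a covering number of roughly $\prod_{s_j<1}s_j^{-1/\alpha}$, up to $e^{m\delta}$ factors coming from $\ell^\infty$ versus $\ell^2$ norms and from the number of scales.

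It therefore remains to bound $\sum_{s_j<1}\log s_j^{-1}$ for the block bidiagonal operator $L$. My approach: $L$ is invertible on sequences with fixed initial condition, so the product of all singular values is $1$, and the small ones are paired with large ones. Using exterior powers and a Cauchy--Binet/Schur complement argument, I expect to show that
\[
\sum_{s_j<1}\log s_j^{-1}\le\sum_{i}\max_k\log^+\|\wedge^kA_i\|-\log\|\wedge^{u}(A_{m-1}\cdots A_0)\|+O(d\log m),
\]
where $u$ is the number of positive exponents. Intuitively, the small singular values correspond to the expanding directions that cannot be detected from the endpoint, and the defect measures the lack of coherence between one-step expansion and asymptotic expansion. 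This singular-value inequality is the main obstacle. I would first try to prove it by factoring $L$ through the transfer matrices $A_{j}\cdots A_i$ and using the minimax characterization of singular values, with subadditivity of $\log\|\wedge^k\cdot\|$. Finally, choosing $q$ large makes the chart errors, the number of scales, and the $\ell^\infty$ losses all $o(1)$ per unit time, which produces $\delta_q\to0$. The choice of $\varepsilon_q$ depends only on $\Upsilon,\alpha$ through the $\cC^{1,\alpha}$ remainder constant for $g=f^q$.
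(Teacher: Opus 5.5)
Your overall architecture matches the paper's: the linearized orbit operator, small singular values as effective dimension, a multiscale covering, a determinant bound on $\sum_j\log^+\sigma_j^{-1}$, and passing to $g=f^q$. But two steps fail as stated.

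\textbf{The covering bound cannot use the singular values at the centre $x$ alone.} So working ``for $\mu$-typical $x$'' does not suffice.

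Relative to $\cL_x$, two points $y,y'$ of $B_g(x,n,\varepsilon)$ satisfy only $\|\cL_x(v-v')\|\lesssim\varepsilon^\alpha\|v-v'\|$, however close they are. This is because the residual contains $(D_{g^iy'}g-D_{g^ix}g)(v_i-v_i')$. With a fixed linearization the threshold therefore never drops below $\varepsilon^\alpha$. At each of the $\asymp\log(\varepsilon/\rho)$ scales you pay for every direction with $s_j(x)\lesssim\varepsilon^\alpha$, in general of order $n$ of them, so the exponential rate blows up as $\rho\to0$.

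To reach $\prod s_j^{-1/\alpha}$ you must re-linearize at a point $z$ of each sub-piece. The threshold is then $\approx\eta^\alpha$ at scale $\eta$, and the scale ratio must be a bounded factor $e^{-\lambda}$. If instead the scale jumps from $\eta$ to $\eta^{1+\alpha}$, the width $\eta^{1+\alpha}/s_j$ exceeds the next scale whenever $s_j<1$, so that direction is paid for at every step.

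Once you re-linearize, the relevant numbers are the singular values of $\cL_z$ for arbitrary $z$ in the ball. Below the level $\varepsilon^\alpha$ these are not controlled by those of $\cL_x$: Weyl only gives $|\sigma_j(\cL_z)-\sigma_j(\cL_x)|\lesssim\varepsilon^\alpha$. Also $\Phi(D_zg^n)$ may be far from $\Phi(D_xg^n)$.

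The paper closes this in two steps:
\begin{itemize}
\item It proves a bound valid at \emph{every} point, $\sum_j\log^+\sigma_j(\cL_z)^{-1}\le\Delta_n(z)+nd$.
\item It intersects each atom of $\cQ^{n,f^q}$ with a level set $\{\Delta^{f^q}_n\in[k,k+1)\}$. These level sets form a partition with $O(n)$ atoms, so adding them to the conditioning in $H_\mu(\cP^{n,f^q}\mid\cQ^{n,f^q}\vee\cR_n)$ costs only $O(\log n)$. Integrating then gives $\frac{c_q}{\alpha}\int\Delta^{f^q}_n\,d\mu=\frac{c_q}{\alpha}\big(n\int\Phi(D_xf^q)\,d\mu-\int\Phi(D_xf^{qn})\,d\mu\big)$ by invariance.
\end{itemize}

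A related point: singular values live in the mean ($\ell^2$) metric, while the H\"older remainder needs sup-norm smallness. So every scale needs an $\ell^2\to\ell^\infty$ refinement whose cost is linear in $n$. These costs must be summable over the unboundedly many scales, as in Lemmas \ref{Lem:key1} and \ref{Lem:sum2} with $\theta<1$. A fixed factor $e^{n\delta}$ per scale is not enough.

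\textbf{The singular-value inequality rests on a false premise and has a false error term.}

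For the rectangular operator $\cL:V\to W$, the product of the nonzero singular values is not $1$. It equals $\det(\cL\cL^T)^{1/2}\ge e^{\Phi(A_{n-1}\cdots A_0)}$, by Cauchy--Binet on $F_0^+\oplus E_1\oplus\cdots\oplus E_{n-1}\oplus(F_n^+)^\perp$. This lower bound is exactly where the $-\Phi(A_{n-1}\cdots A_0)$ term comes from. Combined with Hadamard--Fischer, $\frac12\log\det(\Id+\cL\cL^T)\le nd+\sum_k\Phi(A_k)$, it gives Lemma \ref{Lem:A1}.

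The restriction of $\cL$ to $\{v_0=0\}$ is indeed unipotent, but its singular values are not those of $\cL$. It has $d$ exponentially small singular values coming from linear orbits. For example, with $d=1$ and $A_k=a>1$, the vector $(1,a,\dots,a^{n-1})$ is mapped to $(1,0,\dots,0)$. ``Pairing'' small with large singular values there therefore charges about $\Phi(A_{n-1}\cdots A_0)$, and your inequality fails. These $d$ directions must be split off. The paper does this by putting $\ker\cL_x$ into $V^s_{x,n}$, at a cost of $d\log(1+8e^\lambda)$ per scale, independent of $n$.

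Finally, the error cannot be $O(d\log n)$. For $d=1$ and $A_k=1$, the singular values are $2\sin\frac{\pi k}{2(n+1)}$, so $\sum_k\log^+\sigma_k^{-1}\approx 0.32\,n$ while all $\Phi$ terms vanish. The correct error is $O(nd)$. That is harmless only because you work with $g=f^q$ and divide by $q$.
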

Theorem \ref{Thm:Main-Entropy} provides the key ingredient for Theorem \ref{Thm:1.1} and shows that the function $\lambda(\mu)$ can be chosen as $\lambda_{\Sigma}^+(\mu,f)/\alpha$.
This result was previously stated for diffeomorphisms in dimensions at most three in \cite[Theorem B]{LuY25}, however, it also follows from the previous works \cite{Bur12,BL2022}.
The proof in dimensions two and three is based on Burguet's reparametrization lemma \cite{Bur12}, which extends the Yomdin-Gromov theory \cite{Gro87,Yom87} to one-dimensional curves and provides a remarkable upper bound.
Extending Burguet's refinement of the Yomdin-Gromov theory from one-dimensional curves to arbitrary dimensions remains a difficult problem. 
We show that this result remains valid in arbitrary dimensions without assuming that $f$ is a diffeomorphism.
Our approach is independent of Yomdin's theory, Pesin theory, and related techniques.
An outline of our approach will be given at the end of this section.

A deep result of Newhouse \cite{New89}, based on Yomdin's estimate \cite{Yom87}, shows that there exists $C_r>0$ such that, for every $\cC^{r}$ map $f:M\to M$ and every $q\in\NN$, there exists $\varepsilon_q>0$ satisfying
$$h_\mu(f)\leq h_\mu(f,\cQ)+ \frac{\dim M\cdot \log~\sup_{x\in M}\|D_xf^q\|}{qr}+\frac{\log(C_r)}{q},$$
for every partition $\cQ$ with $\diam(\cQ)<\varepsilon_q$.
Notice that, in the $\cC^{1,\alpha}$ setting, our result provides a better bound than Newhouse's estimate.
We now prove Theorem \ref{Thm:Main-SE} using Theorem \ref{Thm:Main-Entropy} and the symbolic extension theorem \ref{Thm:1.1}.

\begin{proof}[Proof of Theorem \ref{Thm:Main-SE}]
	Let $f:M\to M$ be a $\mathcal{C}^{1,\alpha}$ map with $\|f\|_{\cC^{1,\alpha}}\leq \Upsilon$.
	Let $\{r_q\}_{q>0}$ be a decreasing sequence converging to zero such that $0<r_q\leq \varepsilon_q(\alpha,d,\Upsilon)$ for each $q>0$, where $\varepsilon_q(\alpha,d,\Upsilon)$ is the constant appearing in Theorem \ref{Thm:Main-Entropy}.
	For every $\gamma>0$ and every $f$-invariant measure $\mu$, choose $q_{\mu}>0$ such that for every $q\geq q_{\mu}$ ($\delta_q$ is as in Theorem \ref{Thm:Main-Entropy}), 
	$$ \frac{1}{\alpha}\Big(\frac{1}{q}\int \max_{1\leq k\leq d}~\log^{+} \|\wedge^k D_xf^q\|~{\rm d}\mu(x)-\lambda^{+}_{\Sigma}(\mu,f)\Big)+\delta_q<\gamma. $$
	Then, by Theorem \ref{Thm:Main-Entropy}, for every $q\geq q_{\mu}$ we have 
	\begin{align*}
		     &\limsup_{\nu \to \mu}~\Big[\sup \big\{h_{\nu}(f)-h_{\nu}(f,\cQ):\diam(\cQ)<r_q\big\}+\frac{1}{\alpha} \cdot \lambda^{+}_{\Sigma}(\nu,f) \Big]         \\
		\leq~& \limsup_{\nu \to \mu}~\Big[ \delta_q + \frac{1}{\alpha} \Big( \frac{1}{q} \int \max_{1\leq k\leq d}~\log^{+} \|\wedge^k D_xf^q\|~{\rm d}\nu(x) \Big) \Big] \\ 
		\leq~& \delta_q+ \frac{1}{\alpha}\Big( \frac{1}{q} \int \max_{1\leq k\leq d}~\log^{+} \|\wedge^k D_xf^q\|~{\rm d}\mu(x) \Big) \leq~\frac{1}{\alpha} \cdot \lambda^{+}_{\Sigma}(\mu,f)+\gamma.
	\end{align*}
	Hence, we can take $\lambda(\cdot)=\lambda^{+}_{\Sigma}(\cdot,f)/\alpha$, which is a non-negative affine upper semi-continuous function defined on the space of all $f$-invariant measures.
	Thus, Theorem \ref{Thm:Main-SE} follows immediately from Theorem \ref{Thm:1.1}.
\end{proof}
Newhouse \cite{New89} proved that the entropy map of $\cC^\infty$ systems is upper semi-continuous, while Misiurewicz \cite{Mis73} and Buzzi \cite{Buz14} provided examples showing that the entropy map of $\mathcal C^r$ systems need not be upper semi-continuous.
As an application of Theorem \ref{Thm:Main-Entropy}, we further show that the defect of upper semi-continuity of the entropy map is bounded by the sum of the positive Lyapunov exponents. 
This provides a positive answer to the $\cC^{1,\alpha}$ version of Burguet's conjecture \cite[Conjecture 1]{Bur11} .

\begin{theoremalph}\label{Thm:Main-USC}
	Let $M$ be a $d$-dimensional compact Riemannian manifold and $\alpha\in(0,1]$.
	Let $f:M\to M$ be a $\mathcal C^{1,\alpha}$ map with an invariant probability measure $\mu$.
	Consider sequences of $\mathcal C^{1,\alpha}$ maps $\{f_n\}$ and probability measures $\{\mu_n\}$ such that $f_n\to f$ in the $\mathcal C^{1,\alpha}$ topology and $\mu_n\to\mu$ in the weak$^\ast$ topology, with each $\mu_n$ being $f_n$-invariant.
	Then, we have 
	$$\limsup_{n\to\infty}h_{\mu_n}(f_n)-h_{\mu}(f)\leq \limsup\limits_{n\to\infty} \frac{1}{\alpha}\big(\lambda_{\Sigma}^+(\mu,f)-\lambda_{\Sigma}^+(\mu_n,f_n)\big).$$
	In particular, if $~\lambda_{\Sigma}^+(\mu_n,f_n) \to \lambda_{\Sigma}^+(\mu,f)$, then $\limsup\limits_{n\to\infty}h_{\mu_n}(f_n)\leq h_{\mu}(f)$.
\end{theoremalph}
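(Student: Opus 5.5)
We derive Theorem \ref{Thm:Main-USC} from Theorem \ref{Thm:Main-Entropy}, exploiting that its constants depend on $f$ only through the bound $\Upsilon$. Since $f_n\to f$ in the $\mathcal C^{1,\alpha}$ topology, we can fix $\Upsilon>1$ with $\|f\|_{\mathcal C^{1,\alpha}}\le\Upsilon$ and $\|f_n\|_{\mathcal C^{1,\alpha}}\le\Upsilon$ for all $n$. Fix $q\in\NN$ and let $\varepsilon_q=\varepsilon_q(\alpha,\Upsilon)$ and $\delta_q=\delta_q(\alpha,d,\Upsilon)$ be as in Theorem \ref{Thm:Main-Entropy}. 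Choose a finite partition $\cQ$ with $\diam(\cQ)<\varepsilon_q$ and $\mu(\partial \cQ)=0$. Such a partition exists: for each point take a ball of radius less than $\varepsilon_q/2$ whose boundary sphere has $\mu$-measure zero (all but countably many radii qualify), extract a finite subcover, and let $\cQ$ be the partition generated by these balls. Theorem \ref{Thm:Main-Entropy} applied to $(f_n,\mu_n)$ then gives
\begin{equation*}
h_{\mu_n}(f_n)\le h_{\mu_n}(f_n,\cQ)+\frac1\alpha\Big(\frac1q\int\max_{1\le k\le d}\log^+\|\wedge^kD_xf_n^q\|\,{\rm d}\mu_n(x)-\lambda^+_\Sigma(\mu_n,f_n)\Big)+\delta_q .
\end{equation*}

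We bound the $\limsup$ of each term as $n\to\infty$. \emph{Partition entropy term.} For fixed $m$, we have $h_{\mu_n}(f_n,\cQ)\le \frac1m H_{\mu_n}\big(\bigvee_{i=0}^{m-1}f_n^{-i}\cQ\big)$. We claim this converges to $\frac1m H_{\mu}\big(\bigvee_{i=0}^{m-1}f^{-i}\cQ\big)$. Each atom $\bigcap_i f_n^{-i}Q_{j_i}$ differs from $\bigcap_i f^{-i}Q_{j_i}$ only on a set contained in a small neighborhood of $\bigcup_{i<m} f^{-i}\partial\cQ$, because $f_n^i\to f^i$ uniformly. Since $\mu$ is $f$-invariant, $\mu(f^{-i}\partial\cQ)=\mu(\partial\cQ)=0$, so this neighborhood has small $\mu$-measure, and by weak$^*$ convergence (using closed neighborhoods) it also has small $\mu_n$-measure for large $n$. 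The same argument shows $\mu_n\big(\bigcap_i f^{-i}Q_{j_i}\big)\to\mu\big(\bigcap_i f^{-i}Q_{j_i}\big)$, as these sets have $\mu$-null boundary. Letting $n\to\infty$ and then $m\to\infty$ yields $\limsup_n h_{\mu_n}(f_n,\cQ)\le h_\mu(f,\cQ)\le h_\mu(f)$.

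\emph{Exterior power term.} The function $x\mapsto \max_k\log^+\|\wedge^kD_xf_n^q\|$ is continuous, and it converges uniformly to the corresponding function for $f$, since $Df_n^q\to Df^q$ uniformly. Combined with $\mu_n\to\mu$ weak$^*$, the integral converges to $\frac1q\int\max_k\log^+\|\wedge^kD_xf^q\|\,{\rm d}\mu$.

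Putting these together, we obtain
\begin{equation*}
\limsup_n h_{\mu_n}(f_n)-h_\mu(f)\le \frac1\alpha\Big(\frac1q\int\max_{1\le k\le d}\log^+\|\wedge^kD_xf^q\|\,{\rm d}\mu-\lambda_\Sigma^+(\mu,f)\Big)+\frac1\alpha\limsup_n\big(\lambda^+_\Sigma(\mu,f)-\lambda^+_\Sigma(\mu_n,f_n)\big)+\delta_q .
\end{equation*}
Here we added and subtracted $\lambda^+_\Sigma(\mu,f)/\alpha$ and used that the $\limsup$ of a sum is at most the sum of the $\limsup$s, the first part being a genuine limit. Finally, let $q\to\infty$. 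Then $\delta_q\to0$, and by subadditivity and the Oseledets theorem, $\frac1q\int\max_k\log^+\|\wedge^kD_xf^q\|\,{\rm d}\mu\to\lambda^+_\Sigma(\mu,f)$, since the maximum over $k$ of the top exponent of $\wedge^k$ equals the sum of the positive exponents. This is exactly the limit already used in the proof of Theorem \ref{Thm:Main-SE}, and it gives the claim. The "in particular" statement is immediate.

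The main obstacle is the uniformity: $\varepsilon_q$ and $\delta_q$ must not depend on $n$. This is supplied by Theorem \ref{Thm:Main-Entropy}'s dependence only on $\Upsilon$. The remaining delicate point is the upper semicontinuity of $(g,\nu)\mapsto h_\nu(g,\cQ)$ under simultaneous perturbation of map and measure, which is handled by the boundary-null choice of $\cQ$ as above.
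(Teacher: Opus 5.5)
Your proposal is correct and follows the paper's proof: you fix a uniform $\Upsilon$, apply Theorem~\ref{Thm:Main-Entropy} to $(f_n,\mu_n)$ with one partition $\cQ$ satisfying $\diam(\cQ)<\varepsilon_q$ and $\mu(\partial\cQ)=0$, pass to the limit using upper semicontinuity of $h_{\mu_n}(f_n,\cQ)$ and convergence of the exterior-power integrals, and finally let $q\to\infty$. The only difference is that you give full arguments for two points the paper merely asserts: the existence of such a $\cQ$ (the paper also asks for piecewise smooth boundary, which is unnecessary), and the inequality $\limsup_n h_{\mu_n}(f_n,\cQ)\le h_\mu(f,\cQ)$ when map and measure are perturbed together.
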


Finally, we briefly describe the strategy of our proof of Theorem \ref{Thm:Main-Entropy}.
The key estimate concerns the number of arbitrarily small Bowen balls needed to cover a Bowen ball of a fixed size.
The main idea is based on a simple observation, also used in \cite{LMZ26}: if $\big(E,(\cdot,\cdot)\big)$ is a $d$-dimensional Euclidean inner product space, then any ball of radius $r$ can be covered by at most $(1+2r/\delta)^d$ balls of radius $\delta$; see also Lemma \ref{Lem:Acover}.
A Bowen ball $B(x,n,\varepsilon)$ (see \eqref{eq:dofBowen} for the definition) can be lifted to a subset of the $(n+1)d$-dimensional linear space
$V_n:=T_xM\oplus\cdots\oplus T_{f^n(x)}M$.
For each $v=(v_0,\ldots,v_n)\in V_n$, the Bowen norm $\max_{0\leq k\leq n}\|v_k\|_{f^k(x)}$
is not necessarily induced by an inner product.
However, the inner product $(\cdot,\cdot)_V:=\frac{1}{n+1}\sum_{k=0}^{n}(\cdot,\cdot)_{f^k(x)}$ induces a ``mean'' Bowen metric on $V_n$, and this metric is comparable with the Bowen metric (see Lemma \ref{Lem:key1}).
Hence, the use of the mean Bowen metric does not affect the entropy estimates, as in \cite{LuZ21}.
Under this inner product, we can study the covering of a Bowen ball.

Although $V_n$ has dimension $(n+1)d$, a mean Bowen ball is determined by the orbit of a point in $T_xM$, and its ``effective dimension'' is much smaller than $nd$.
In the extreme case where all the lifted maps $F^k:T_xM\to T_{f^k(x)}M$ are linear, 
$(u,v)_n:=\frac{1}{n+1}\sum_{k=0}^{n}(F^k(u),F^k(v))_{f^k(x)}$ defines an inner product on $T_xM$.
Therefore, the effective dimension of a mean Bowen ball is only $d$, independent of $n$.
However, in general, $F^j$ need not be linear. 
Hence, we define the \textit{linearized orbit operator} $\cL_x(v_0,\ldots,v_n)=\big(v_1-D_{x}f(v_0),~\cdots,v_n-D_{f^{n-1}x}f(v_{n-1})\big)$  (see Section \ref{Sec:CofX} for details), which compares the deviation between the nonlinear orbit and its linear approximation.
We show that the ``effective dimension'' of a Bowen ball is controlled by the number of small singular values of the corresponding linearized orbit operator (see Lemma \ref{Lem:Key2}).
Estimating this number yields the right-hand side of the inequality in Theorem \ref{Thm:Main-Entropy}.

The structure of this paper is as follows. 
In Section \ref{SEC:Entropy} and Section \ref{SEC:3}, we recall some preliminaries on entropy and linear algebra. 
In Section \ref{SEC:4}, we establish a key Theorem \ref{Thm:key}, which gives an estimate of the number of small Bowen balls needed to cover a large Bowen ball. 
The proofs of Theorem \ref{Thm:Main-Entropy} and Theorem \ref{Thm:Main-USC} are provided in the last two sections.

\section{Preliminaries on entropy and Lyapunov exponents}\label{SEC:Entropy}
Assume that $M$ is a $d$-dimensional compact boundaryless Riemannian manifold.
For a $\mathcal C^{1,\alpha}$ map $f:M\to M$ with $\alpha\in (0,1]$, we mean
\begin{itemize}
	\item if $\alpha=1$, it is a $\mathcal C^1$ map, and its derivative $Df$ is a Lipschitz map;
	\item if $\alpha\in(0,1)$, it is a $\mathcal  C^1$ map, and its derivative $Df$ is a $\alpha$-H\"older map.
\end{itemize} 
Assume that $X$ is a compact metric space and $Y$ is a Banach space.
For $\alpha\in (0,1]$ and a $\alpha$-H\"{o}lder continuous map $H:X\to Y$, define
\begin{equation}\label{eq:normd}
	\|H\|_0=\sup_{x\in X}\|H(x)\|,~~\|H\|_\alpha=\sup \left \{ \dfrac{d(H(x),H(y))}{d(x,y)^\alpha}:x\neq y,~x,y\in X \right\}.
\end{equation}
For $\mathcal C^{1,\alpha}$ map $f:~M\to M$, define
$$\|f\|_{\mathcal C^{1,\alpha}}:=\max\big\{\|Df\|_0,~\|Df\|_\alpha\big\}.$$

We recall some fundamental definition and properties of entropies.
Let $\mu$ be a  probability measure.
Given a finite partition $\mathcal P$, define the static entropy function
$$H_\mu(\mathcal P):=\sum_{P\in\mathcal P}-\mu(P)\log\mu(P).$$
By definition, one has 
\begin{equation}\label{eq:estimate-conditional}
	H_\mu(\mathcal P)\le \log\#\{P\in\mathcal P:~\mu(P)>0\}.
\end{equation}
Given two finite partitions $\mathcal P$ and $\mathcal Q=\{Q_1,\cdots,Q_k\}$, define the conditional entropy
$$H_\mu(\mathcal P\big|\mathcal Q):=\sum_{j=1}^k\mu(Q_j)H_{\mu_j}(\mathcal P),$$
where $\mu_j(\cdot)=\mu(Q_j\cap \cdot)/\mu(Q_j)$ denotes the normalization of $\mu$ restricted on $Q_j$.

For an $f$-invariant measure $\mu$ and a finite partition $\mathcal P$, denote by 
$$\mathcal P^{n,f}=\bigvee_{j=0}^{n}f^{-j}(\mathcal P).$$
The metric entropy of $\mu$ associated to a partition $\mathcal P$ is defined as 
$$h_\mu(f,\mathcal P):=\lim_{n\to\infty}\frac{1}{n}H_\mu(\mathcal P^{n,f});$$
and the metric entropy of $\mu$ is defined as
$$h_\mu(f):=\sup \{h_\mu(f,\mathcal P): \mathcal P~\textrm{is a finite partition}\}.$$
Note that $h_{\mu}(f^q)=qh_{\mu}(f)$ and $h_\mu(f^q,\mathcal P)\leq q h_\mu(f,\mathcal P)$ for every finite partition $\mathcal{P}$ and every $q\in \NN$ (see \cite[Theorem 4.13]{Wal82}).

\begin{Remark}
	The $n$-th join of the partition $\mathcal P$ of $f$ is usually defined by $\cP^{n,f}=\bigvee_{j=0}^{n-1}f^{-j}(\mathcal P)$.
	However, since the Bowen metric $d_n$ defined later also involves the maximum over $0\leq i\leq n$, for convenience we define
	$\cP^{n,f}=\bigvee_{j=0}^{n}f^{-j}(\mathcal P)$.
	This modification does not change the definition of $h_{\mu}(f,\cP)$.
\end{Remark}

We finish this section by recalling the Lyapunov exponents.
Let $f:M\to M$ be a $\cC^1$ map. 
By the Oseledets theorem \cite{Ose68}, there exists an invariant set $\Gamma\subset M$ with total probability, 
i.e., $\mu(\Gamma)=1$ for any $f$-invariant measure $\mu$, such that for any $x\in\Gamma$, there are
\begin{itemize}
	\item  $\lambda_1(x,f)>\cdots>\lambda_{s(x)}(x,f)\geq -\infty$, which are $f$-invariant and measurable functions of $x$;
	\item a measurable flag $\{0\}\subseteq E_{s(x)}(x,f) \subseteq \cdots \subseteq E_1(x,f)=T_xM$ with $D_xfE_i(x,f)\subset E_i(f(x),f)$;
\end{itemize}
such that for any $v\in E_i(x,f)\setminus E_{i+1}(x,f)$ with $1\le i\le s(x)$ ($E_{s(x)+1}(x,f)=\{0\}$), one has
$$\lim_{n\to \infty}\frac{1}{n}\log\|D_xf^n(v)\|=\lambda_i(x,f).$$
These numbers $\{\lambda_i(x,f)\}_{i=1}^{s(x)}$ are called the \emph{Lyapunov exponents} of $ x\in \Gamma$.
Given $x\in\Gamma$, the sum of positive Lyapunov exponents is an important quantity used to describe the complexity of the dynamics. 
We define it as
$$\lambda_\Sigma^+(x,f):=\sum_{i=1}^{s(x)}\big[\dim E_i(x,f)-\dim E_{i+1}(x,f)\big] \cdot \max\big\{0,~\lambda_i(x,f)\big\}.$$
For an invariant measure $\mu$ of $f$, define the \emph{the sum of positive Lyapunov exponents} of $\mu$ by
$\lambda_\Sigma^{+}(\mu,f)=\int \lambda_\Sigma^+(x,f){\rm d}\mu(x)$.
Note that $\lambda^+_{\Sigma}(\cdot, f)$ is a non-negative affine upper semi-continuous function on the set of all $f$-invariant measures.

By the Oseledets theorem \cite{Ose68}, the sum of the positive Lyapunov exponents admits an equivalent definition in terms of exterior powers.
For $1\leq k\leq d$ and $x\in M$, let $\wedge^k D_xf$ denote the induced action of $D_xf$ on the $k$-th exterior power.
Define the sequence of functions $\{\Phi_n\}_{n\geq1}$ by
$$\Phi_{n}(x):=\max_{1\leq k\leq d} \log^{+} \|\wedge^k D_xf^n\|.$$
Then $\{\Phi_n\}_{n\geq1}$ is a sub-additive sequence of functions.
By the sub-additive ergodic theorem, for every $f$-invariant measure $\mu$, there exists a full $\mu$-measure set $X_{\mu}$ such that for every $x\in X_{\mu}$, the limit
$$\Phi^{\ast}(x):=\lim_{n\to \infty} \frac{1}{n} \max_{1\leq k\leq d} \log^{+} \|\wedge^k D_xf^n\|$$
exists.
Moreover, by the Oseledets theorem \cite{Ose68},  $\Phi^{\ast}(x)=\lambda^{+}_{\Sigma}(x,f)$ for $\mu$-almost every $x$.
Therefore, 
\begin{equation}\label{eq:positveLE}
	\lim_{n\to \infty}  \frac{1}{n} \int \Phi_{n}(x)~{\rm d}\mu(x)
	= \inf_{n>0} \frac{1}{n} \int \Phi_{n}(x)~{\rm d}\mu(x)
	= \int~\Phi^{\ast}(x)~{\rm d}\mu(x)=\lambda^+_{\Sigma}(\mu,f).
\end{equation}
Since every $\Phi_n$ is continuous, it follows that $\mu \mapsto \lambda^+_{\Sigma}(\mu,f)$ is upper semi-continuous.

\section{Preliminaries on linear algebra}\label{SEC:3}
Let $\big(V, (\cdot,\cdot)_{V}\big)$ and $\big(W, (\cdot,\cdot)_{W}\big)$ be finite-dimensional Euclidean inner product spaces, where $\dim V=k$ and $\dim W=s$.
Assume that $A:V\to W$ is a linear map.
The singular values of $A$ are defined as the positive square roots of the eigenvalues of the operator $A\cdot A^T:W\to W$ counted with multiplicity, where $A^T:W\to V$ is the adjoint linear map of $A$.
If $A$ is surjective (which requires $k\geq s$), then $A$ has exactly $s$-nonzero singular values, counted with multiplicity. 
We denote all the singular values of $A$ by
\begin{equation}\label{eq:dofPhi}
	\sigma_1(A)\geq \sigma_{2}(A) \geq \cdots \geq \sigma_{s}(A)\geq 0.
\end{equation}
Let $\log^+a=\max\{\log a,0\}$ and denote by $\wedge^t A$ the $t$-th exterior power of $A$ for $1\leq t\leq s$. 
Define
$$\Phi(A)=\max_{1\leq t \leq s} \log^{+}  \|\wedge^t A\|= \sum_{t=1}^{s} \log^{+} \sigma_{t}(A).$$

In general, the singular values of $A$ depend on the inner products on $V$ and $W$.  
However, they remain unchanged if both inner products are multiplied by the same positive constant.  
Indeed, for $a>0$, replacing the inner products by $(\cdot,\cdot)'_V:=a(\cdot,\cdot)_V$ and  $(\cdot,\cdot)'_W:=a(\cdot,\cdot)_W$ does not change the adjoint $A^T$ and the eigenvalues of $A\cdot A^T$.

Let $\big\{\big(E_k, (\cdot,\cdot)_k\big) \big\}_{k=0}^{n}$ be a family of $d$-dimensional Euclidean inner product linear spaces.
For each $0\leq k<n$, assume that $A_k:E_k\to E_{k+1}$ is a linear map.
Let 
$$V=\bigoplus_{k=0}^n E_k,~~~W=\bigoplus_{k=1}^n E_k.$$
For $v=(v_0,\cdots,v_n)\in V$, $v'=(v'_0,\cdots,v'_n)\in V$ and $w=(w_1,\cdots,w_n)\in W$, $w'=(w'_1,\cdots,w'_n)\in W$,
define the inner products 
$$(v,v')_{V} = \frac{1}{n+1}\sum_{k=0}^{n} (v_k,v'_k)_k,~~(w,w')_{W} = \frac{1}{n+1} \sum_{k=1}^{n} (w_k,w'_k)_k.$$
Let $\cL :V\to W$ denote the linear map 
$$\cL(v_0,\cdots,v_n)=(v_1-A_0v_0,~v_2-A_1v_1,~\cdots,v_n-A_{n-1}v_{n-1}).$$
Note that $\cL$ is surjective.
Indeed, for any $(u_1,\cdots,u_n)\in W$, fix an arbitrary $v_0\in E_0$ and define recursively $v_{k+1}=A_k(v_k)+u_{k+1}$ for each $0\leq k<n$, then $\cL(v_0,\cdots,v_n)=(u_1,\cdots,u_n)$.
Hence, $\cL$ has $nd$-nonzero singular values.
\begin{Lemma}\label{Lem:A1}
	Denote by $\sigma_{1}(\cL)\geq \cdots \geq \sigma_{nd}(\cL)$ all the nonzero singular values of $\cL$.
	Then, we have 
	$$\sum_{j=1}^{nd} \log^{+} [\sigma_{j}(\cL)]^{-1}\leq nd+\sum_{k=0}^{n-1}\Phi(A_k)-\Phi(A_{n-1}\circ A_{n-2} \circ \cdots \circ A_0).$$
\end{Lemma}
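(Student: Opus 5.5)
The plan is to split the left-hand side as
$$\sum_{j=1}^{nd}\log^{+}[\sigma_j(\cL)]^{-1}=\sum_{j=1}^{nd}\log^{+}\sigma_j(\cL)-\sum_{j=1}^{nd}\log\sigma_j(\cL)=\sum_{j=1}^{nd}\log^{+}\sigma_j(\cL)-\tfrac12\log\det(\cL\cL^T).$$
I will then prove two separate estimates: an upper bound $\sum_j\log^+\sigma_j(\cL)\le nd\log 2+\sum_{k=0}^{n-1}\Phi(A_k)$, and a lower bound $\tfrac12\log\det(\cL\cL^T)\ge \Phi(A_{n-1}\circ\cdots\circ A_0)$. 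Since $\log 2<1$, these two bounds combine to give the lemma. As remarked before the statement, multiplying both inner products by $n+1$ does not change the singular values. So throughout I may use the unnormalized inner products $\sum_k(\cdot,\cdot)_k$ on $V$ and $W$.

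\textbf{Upper bound.} Write $\cL=X-Y$, where $X(v_0,\dots,v_n)=(v_1,\dots,v_n)$ and $Y(v_0,\dots,v_n)=(A_0v_0,\dots,A_{n-1}v_{n-1})$. Then $XX^T=\Id_W$, and $YY^T=\bigoplus_{k}A_{k-1}A_{k-1}^T$ is block diagonal. Hence the nonzero singular values of $Y$ are, counted with multiplicity, the union over $k$ of the singular values of the $A_k$. Expanding $(X-Y)(X-Y)^T\ge 0$ and $(X+Y)(X+Y)^T\ge0$ gives the Loewner inequality
$$\cL\cL^T\le 2XX^T+2YY^T=2(\Id_W+YY^T).$$
By min--max (Weyl monotonicity), $\sigma_j(\cL)^2\le 2(1+\sigma_j(Y)^2)\le 4\max\{1,\sigma_j(Y)^2\}$. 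Therefore $\log^+\sigma_j(\cL)\le\log2+\log^+\sigma_j(Y)$. Summing over $j$ and using $\sum_j\log^+\sigma_j(Y)=\sum_k\sum_t\log^+\sigma_t(A_k)=\sum_k\Phi(A_k)$ yields the claimed upper bound.

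\textbf{Lower bound on the determinant.} Identify $V=E_0\oplus W$ and write $\cL(v_0,w)=-Kv_0+Tw$. Here $Kv_0=(A_0v_0,0,\dots,0)$ and $T(v_1,\dots,v_n)=(v_1,\,v_2-A_1v_1,\dots,v_n-A_{n-1}v_{n-1})$. The map $T$ is block lower unipotent, so $\det T=1$ with respect to orthonormal bases. Then
$$\det(\cL\cL^T)=\det(TT^T+KK^T)=\det(TT^T)\det(\Id_W+MM^T)=\det(\Id_{E_0}+M^TM),$$
where $M=T^{-1}K:E_0\to W$. Solving the recursion gives $Mv_0=(A_0v_0,\,A_1A_0v_0,\dots,\,\Pi v_0)$, where $\Pi=A_{n-1}\circ\cdots\circ A_0$. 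Hence $M^TM=\sum_{k}(A_{k}\cdots A_0)^T(A_k\cdots A_0)\ge \Pi^T\Pi$. Monotonicity of eigenvalues then gives
$$\det(\Id+M^TM)\ge\prod_{t=1}^d(1+\sigma_t(\Pi)^2)\ge\prod_t\max\{1,\sigma_t(\Pi)^2\}.$$
Consequently $\tfrac12\log\det(\cL\cL^T)\ge\sum_t\log^+\sigma_t(\Pi)=\Phi(\Pi)$.

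I expect the main point of care to be the determinant step. I need to justify the Schur-complement/Sylvester manipulations, which reduce $\det(\cL\cL^T)$ to a $d\times d$ determinant, and to confirm that $\det T=1$ holds in orthonormal bases of the $E_k$. These are routine once the block structure is written out. The Loewner comparison in the first step is elementary, and the factor $2$ it costs is absorbed because $\log 2<1$, which is exactly where the $nd$ term comes from.
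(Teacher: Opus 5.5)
Your proof is correct. All the steps check out: the reduction to unnormalized inner products, the Loewner bound $\cL\cL^T\le 2(\Id_W+YY^T)$, and the identity $\det(\cL\cL^T)=\det(\Id_{E_0}+M^TM)$. On your worry about $\det T$: $T$ is an endomorphism of $W$, so $\det T=1$ does not depend on the basis, and $\det(TT^T)=(\det T)^2=1$.

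Your argument has the same two-part skeleton as the paper's. First, the large singular values of $\cL$ are controlled by $\sum_k\Phi(A_k)$ up to an $O(nd)$ error. Second, $\det(\cL\cL^T)$ is bounded below by $e^{2\Phi(A_{n-1}\circ\cdots\circ A_0)}$. The tools differ in both halves.

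\textbf{First half.} The paper uses $\log^+\sigma^{-2}\le\log(1+\sigma^{-2})$ to pass to $\tfrac12\log\det(\Id_W+\cL\cL^T)-\tfrac12\log\det(\cL\cL^T)$. It then applies the Hadamard--Fischer inequality to the block tridiagonal matrix $\Id_W+\cL\cL^T$, whose diagonal blocks are $2\Id+A_{j-1}A_{j-1}^T$. You instead use the exact identity $\log^+\sigma^{-1}=\log^+\sigma-\log\sigma$ and compare eigenvalues one at a time via Weyl monotonicity. This costs $nd\log 2$ rather than $nd$.

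\textbf{Second half.} The paper uses the variational formula $\det(\cL\cL^T)=\max_E[\det(\cL|_E)]^2$. It evaluates $\det(\cL|_E)$ on the subspace $F_0^+\oplus E_1\oplus\cdots\oplus E_{n-1}\oplus(F_n^+)^\perp$, which is adapted to the singular value decomposition of $A=A_{n-1}\circ\cdots\circ A_0$, using the unimodular changes of variables $T_1,T_2$. Your Schur complement/Sylvester computation instead gives the exact formula
$\det(\cL\cL^T)=\det\big(\Id_{E_0}+\sum_{k=0}^{n-1}(A_k\circ\cdots\circ A_0)^T(A_k\circ\cdots\circ A_0)\big)$.
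This is the Gram determinant of the kernel parametrization $v_0\mapsto(v_0,A_0v_0,\dots,Av_0)$. It needs no adapted subspace and gives the slightly stronger bound $\prod_t(1+\sigma_t(A)^2)$.

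In short, the paper's route works purely with determinants. Yours invokes Courant--Fischer/Weyl monotonicity, but in exchange it is more transparent and marginally sharper. Either version suffices for the later use of the lemma.
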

\begin{proof}
	Denote by $\cL^{T}:W\to V$ the adjoint of $\cL$.
	Then $\big\{[\sigma_{1}(\cL)]^2,\cdots,[\sigma_{nd}(\cL)]^2\big\}$ are precisely the eigenvalues of $\cL \circ \cL^T:W\to W$.
	Therefore
	\begin{align*}
		\sum_{j=1}^{nd} \log^{+} \sigma_{j}(\cL)^{-1}&=\frac{1}{2} \sum_{j=1}^{nd} \log^{+}  \sigma_{j}(\cL)^{-2}\leq \frac{1}{2} \sum_{j=1}^{nd} \log \big[1+ \sigma_{j}(\cL)^{-2}\big] \\
		&=\frac{1}{2} \log~\det\big(\Id|_{W}+ \cL \circ \cL^T \big)-\frac{1}{2} \log~\det\big(\cL \circ \cL^T \big).
	\end{align*}
	Note that 	
	\begin{align*}
		\cL \circ \cL^T =
		\begin{pmatrix}
			\Id|_{E_1}+A_0A_0^T&-A_1^T&\cdots&0\\
			-A_1&\Id|_{E_2}+A_1A_1^T&\ddots&\vdots\\
			\vdots&\ddots&\ddots&-A_{n-1}^T\\
			0&\cdots&-A_{n-1}&\Id|_{E_{n}}+A_{n-1}A_{n-1}^T
		\end{pmatrix}.
	\end{align*}
	Hence, by Hadamard-Fischer inequality we have
	\begin{align*}
		\frac{1}{2} \log~\det\big(\Id|_{W}+ \cL\cL^T \big)
		&\leq \frac{1}{2} \sum_{j=1}^{n}~\log~\det\big(2\Id|_{E_{j}}+ A_{j-1}A_{j-1}^T \big) \\
		&\leq  \sum_{j=1}^{n}~ \sum_{i=1}^{d}~\frac{1}{2}\log~\big(2+[\sigma_i(A_{j-1})]^2\big) \\
		&\leq  \sum_{j=1}^{n}~ \sum_{i=1}^{d}~1+\log^{+}~[\sigma_i(A_{j-1})]\\
		&=nd+\sum_{j=0}^{n-1}\Phi(A_j).
	\end{align*}
	
	Next, we estimate $\det\big(\cL \circ \cL^T \big)$. 
	Recall that 
	$$\det\big(\cL \circ \cL^T \big)= \max \big\{[\det (\cL|_{E})]^2:~E\subset V,~{\rm \dim}E=nd \big\}.$$
	Let $A=A_{n-1} \circ \cdots \circ A_0$.
	Choose $0\leq m\leq d$ such that $A$ has exactly $m$ singular values $> 1$. 
	Let $F_0^+\subset E_0$ be the right singular subspace of $A$ corresponding to the singular values strictly larger than $1$, and set $F_n^+:=A(F_0^+)\subset E_n$.
	Then, $$E:=F_0^{+}\oplus E_1\oplus \cdots \oplus E_{n-1} \oplus (F_n^+)^{\perp}\subset V$$
	is a $nd$-dimensional subspace of $V$, where $(F_n^+)^{\perp}$ denotes the orthogonal complement of $F_n$ in $\big(E_n,(\cdot,\cdot)_{n}\big)$. 
	Hence, we have $\det(\cL \circ \cL^T)\geq [\det(\cL|_{E})]^2$.
	It suffices to show that 
	\begin{equation}\label{eq:detcL}
		\log \det(\cL|_{E})=\Phi(A_{n-1}\circ A_{n-2} \circ \cdots \circ A_0).
	\end{equation}
	Define the linear map $T_1:E \to E$ and $T_2:W\to W$ by 
	\begin{align*}
		T_1(u,v_1,\cdots,v_{n-1},w)&=(u,~v_1-A_0(u),\cdots,~v_{n-1}-A_{n-2}(v_{n-2}),~w) \\
		T_2(u_1,u_2,\cdots,u_{n-1},u_n)&=\big(u_1,~u_2,~\cdots,~u_{n-1},~\sum_{j=1}^{n-1} A_{n-1}\circ \cdots \circ A_{j}(u_j)+u_n \big)
	\end{align*}
	A direct computation show that $\det(T_1)=1$ and $\det(T_2)=1$.
	Note that 
	$$T_2\circ \cL \circ T_1^{-1}(u,v_1,\cdots,v_{n-1},w)=\big(v_1,~v_2,~\cdots,~v_{n-1},~w-A(u)\big).$$
	Hence, 
	$$\det(\cL|_{E})=\det\big((u,w)\mapsto w-A(u)\big)=\det(A|_{F_0^+})=e^{\Phi(A)}.$$
	This completes the proof of the lemma.
\end{proof}

\begin{Remark}
	Note that both the definition of the singular values and the definition of $\Phi$ depend on the choice of inner products.
	Nevertheless, Lemma \ref{Lem:A1} remains valid if the inner products $(\cdot,\cdot)_V$ and $(\cdot,\cdot)_W$ are both replaced by $a(\cdot,\cdot)_V$ and $a(\cdot,\cdot)_W$ for some $a>0$.	
\end{Remark}

\begin{Lemma}\label{Lem:Acover}
	Let $a>0$ and $b>0$. 
	For every $d\in \NN$, every $d$-dimensional inner product linear space $\big(V,(\cdot,\cdot)_{V}\big)$ and every $x\in V$, there exists 
	$\{z_i\}_{i=1}^{N}\subset \{y:\|x-y\|_{V}\leq b \}$ such that 
	$$\big\{y:\|x-y\|_{V}\leq b \big\} \subset \bigcup_{i=1}^{N} \big\{z:\|z-z_i\|_{V}\leq  a\big\}~\text{and}~N\leq \left(1+\frac{2b}{a}\right)^d,$$
	where $\|\cdot\|_{V}$ is the norm induced by $(\cdot,\cdot)_{V}$.
\end{Lemma}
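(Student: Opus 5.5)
We will prove Lemma \ref{Lem:Acover} by the standard maximal-separated-set and volume comparison argument. Write $B(y,r)=\{z:\|z-y\|_V\le r\}$ for closed balls in $(V,(\cdot,\cdot)_V)$. First we fix an isometric identification of $(V,(\cdot,\cdot)_V)$ with $\mathbb R^d$ carrying the standard Euclidean structure, by choosing an orthonormal basis. Under this identification, Lebesgue measure $\mathrm{vol}$ is translation invariant and satisfies $\mathrm{vol}(B(y,r))=c_d r^d$ for a constant $c_d>0$ depending only on $d$.

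Next we choose a maximal family $\{z_i\}_{i=1}^N\subset B(x,b)$ that is $a$-separated, i.e. $\|z_i-z_j\|_V>a$ for $i\ne j$. Such a maximal family exists and is finite, which follows from the volume bound below: any $a$-separated family has at most $(1+2b/a)^d$ elements, so we may take one of maximal cardinality. By maximality, every $y\in B(x,b)$ satisfies $\|y-z_i\|_V\le a$ for some $i$; otherwise $y$ could be added to the family. Hence $B(x,b)\subset\bigcup_i B(z_i,a)$, and every $z_i$ lies in $B(x,b)$, as the statement requires.

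For the cardinality bound, the open balls of radius $a/2$ centered at the $z_i$ are pairwise disjoint, since the centers are more than $a$ apart. These balls are all contained in $B(x,b+a/2)$, because $\|z_i-x\|_V\le b$. Comparing volumes gives
\[
N\,c_d\,(a/2)^d\le c_d\,(b+a/2)^d,
\]
so $N\le (1+2b/a)^d$.

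No step here is a real obstacle. The only point needing care is that existence of a maximal separated set relies on the finiteness furnished by the volume bound, and so the argument should be ordered as above: the bound on $a$-separated families comes first, and maximality is used afterwards.
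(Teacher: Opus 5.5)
Your proposal is correct and follows the same route as the paper's proof: a maximal $a$-separated subset of $B(x,b)$ gives the cover, and comparing the volume of the disjoint balls of radius $a/2$ with that of $B(x,b+a/2)$ gives $N\le(1+2b/a)^d$. Your use of strict separation, and your explicit check (via the a priori cardinality bound) that a maximal separated family exists, only make precise details the paper leaves implicit.
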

\begin{proof}
	Let ${\rm Vol}_{V}$ denote the Lebesgue measure on $V$ induced by the inner product $(\cdot,\cdot)_{V}$.
	For $x\in V$ and $r>0$, let $B(x,r):=\{y:\|x-y\|_{V}\leq r\}$.
	Then, for all $a,b>0$
	$${\rm Vol}_{V}(B(x,a))={\rm Vol}_{V}(B(0,a)),~~{\rm Vol}_{V}(B(0,a))=\big(\frac{a}{b}\big)^d \cdot {\rm Vol}_{V}(B(0,b)).$$ 
	Choose a maximal $a$-separated subset $\{z_1,\ldots,z_N\}\subset B(x,b)$: $\|z_{i}-z_{j}\|_{V}\geq a$ for every $i\neq j$ and $\{B(z_i,a)\}_{i=1}^{N}$ is a cover of $B(x,b)$.
	Since $\{B(z_i,a/2)\}_{i=1}^{N}$ have pairwise disjoint interiors and belong to $B(x,b+\frac{a}{2})$, it follows that 
	$$N\leq \frac{{\rm Vol}_{V}\big(B(x,b+\frac{a}{2})\big)}{\min_{z \in B(x,b)}{\rm Vol}_{V}(B(z,a/2))}\leq  \left(1+\frac{2b}{a}\right)^d.$$
	This completes the proof of the lemma.
\end{proof}

\section{A covering estimate for large Bowen balls}\label{SEC:4}
Suppose that $M$ is a compact Riemannian manifold without boundary and let $f:M\to M$ be a map.
For each $n>0$, we define $d_n^{f}(x,y)=\max_{0\leq k \leq n} d(f^k(x),f^k(y))$ and let 
\begin{equation}\label{eq:dofBowen}
	B_n(x,r,f):=\{y\in M:d_n^{f}(x,y)\leq r\}.
\end{equation}
The key step in proving the main theorem is to show that, for a fixed $\varepsilon$, the Bowen ball $B_n(x,\varepsilon,f)$ can be covered by how many Bowen balls $B_n(x,\delta,f)$, where $\delta$ can be chosen arbitrarily small compared with $\varepsilon$.
For each $x\in M$, we define
\begin{equation}\label{eq:DofD}
	\Delta^f_{n}(x)=\sum_{j=0}^{n-1} \max_{1\leq t\leq d} \log^{+} \|\wedge^t D_{f^j(x)}f\|- \max_{1\leq t\leq d}  \log^{+} \|\wedge^t D_{x}f^n\|\geq 0.
\end{equation}
and for each $Z\subset M$ define
$$r_n\big(Z,\delta,f\big):=\min  \Big\{N:~\exists\{Z_i\}_{i=1}^{N}~\text{s.t.}~\bigcup_{i=1}^{N} Z_{i}= Z~\text{and}~\diam_{d_n^f}(Z_i)\leq \delta \Big\}$$
The main result of this section is the following theorem.
\begin{Theorem}\label{Thm:key}
	Let $M$ be a compact Riemannian manifold of dimensional $d$.
	For $\alpha\in (0,1]$ and $\Upsilon>1$, there exist sequences of positive numbers $\{c_q\}_{q>0}$, $\{C_q(\alpha)\}_{q>0}$ and $\{\varepsilon_q(\alpha,\Upsilon)\}_{q>0}$, such that 
	\begin{enumerate}
	\item [(1)] $c_q\to 1$, $C_q(\alpha)/q\to 0$ as $q\to \infty$;
	\item [(2)] for every $\cC^{1,\alpha}$ map $f:~M\to M$ satisfying $\|f\|_{\mathcal C^{1,\alpha}}\leq \Upsilon$;
	\item [(3)] for every $0<\delta<\varepsilon\leq \varepsilon_q(\alpha,\Upsilon)$, there exists $D_q(d,\varepsilon,\delta)>0$ such that for every $n\in \NN$, $\Delta_n \in \NN$ and every $x\in M$
	$$\log r_n\Big(B_n(x,\varepsilon,f^q)\cap \big\{y\in M:\Delta^{f^q}_n(y)\in [\Delta_n,\Delta_n+1)\big\},~\delta,f^q\Big)\leq \frac{c_q}{\alpha}\cdot \Delta_n+nd C_q(\alpha)+D_q(d,\varepsilon,\delta).$$
	\end{enumerate}	 
\end{Theorem}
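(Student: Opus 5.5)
The plan is to work with $g=f^q$ and run a multiscale refinement in the lifted orbit spaces, using the linearized orbit operator of Lemma \ref{Lem:A1} at every scale.

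\textbf{Local charts and one refinement step.} Fix a point $y$ of the set $Z:=B_n(x,\varepsilon,g)\cap\{\Delta^{g}_n\in[\Delta_n,\Delta_n+1)\}$. For $z\in B_n(y,\rho,g)$ put $w_k=\exp^{-1}_{g^ky}(g^kz)\in T_{g^ky}M$. Then $w=(w_0,\dots,w_n)$ lies in $V=\bigoplus_k T_{g^ky}M$, equipped with the mean inner product of Section \ref{SEC:3}. The local lifts $G_k=\exp^{-1}_{g^{k+1}y}\circ g\circ\exp_{g^ky}$ satisfy
\[
\|G_k(w)-D_{g^ky}g\,w\|\le K_q\|w\|^{1+\alpha},
\]
where $K_q$ depends only on $q,\alpha,\Upsilon$ and $M$. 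Let $\mathcal L_y$ be the operator of Lemma \ref{Lem:A1} with $A_k=D_{g^ky}g$. Then the lift of $B_n(y,\rho,g)$ lies in $\{w:\|w\|_V\le\rho,\ \|\mathcal L_yw\|_W\le K_q\rho^{1+\alpha}\}$.

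Take the singular decomposition of $\mathcal L_y$: there are $nd$ directions with singular values $\sigma_j$, plus the $d$-dimensional kernel. In direction $j$ the set has extent at most $\min(\rho,K_q\rho^{1+\alpha}/\sigma_j)$. So at the next scale $\rho'=\theta\rho$, with $\theta=\theta_q$ a small fixed ratio, only directions with $\sigma_j<K_q\rho^\alpha/\theta$ need subdividing, together with the kernel. A box version of Lemma \ref{Lem:Acover}, applied to this ellipsoid-like set, gives at most
\[
\exp\Big(\#\{j:\sigma_j<K_q\rho^\alpha/\theta\}\cdot\log(3/\theta)+d\log(3/\theta)\Big)
\]
mean-metric $\rho'$-balls.

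\textbf{Choosing $\varepsilon_q$ and recentering.} Choose $\varepsilon_q$ so that $K_q\varepsilon_q^\alpha\le\theta$. Then direction $j$ is refined only at scales $\rho_i=\theta^i\varepsilon$ with $\rho_i^\alpha\ge\theta\sigma_j/K_q$. The number of such scales is at most $\frac{1}{\alpha}\frac{\log^+(1/\sigma_j)}{\log(1/\theta)}+1$, so its total contribution to the log-count is $\frac1\alpha\log^+\sigma_j^{-1}+\log(3/\theta)+O(1)$. Summing over $j$ and using Lemma \ref{Lem:A1} with $\Phi(A_k)=\max_t\log^+\|\wedge^tD_{g^ky}g\|$ gives
\[
\frac1\alpha\big(nd+\Delta^g_n(y)\big)+nd\,O(\log(1/\theta))\le\frac1\alpha(\Delta_n+1)+nd\,C'_q.
\]
At each scale I recenter: each new piece meets $Z$, so I choose a new center $y'$ in it and use $\mathcal L_{y'}$ in the next step. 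The value $\Delta^g_n(y')$ stays in the window $[\Delta_n,\Delta_n+1)$, which is exactly why the theorem restricts to this level set. The kernel directions cost $d\log(3/\theta)$ per scale over $\log(\varepsilon/\delta)/\log(1/\theta)$ scales, which is independent of $n$ and goes into $D_q(d,\varepsilon,\delta)$. To get $c_q\to1$, I would let the unavoidable multiplicative slack (from recentering and from $K_q$) enter only through $\theta$- and $q$-dependent additive terms, allowing a factor like $(1+1/q)$ in front of $\Delta_n$. Since $nd$ and $\log(1/\theta_q)$ are per $g$-step, i.e. per $q$ steps of $f$, choosing $\theta_q$ with $\log(1/\theta_q)=o(q)$ gives $C_q/q\to0$.

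\textbf{Main obstacle.} The hard step is turning mean-metric balls back into genuine Bowen balls for $d_n^g$. A small mean norm $\|w\|_V$ does not bound $\max_k\|w_k\|$. I would invoke the comparison Lemma \ref{Lem:key1}. The idea is that the orbit constraint $\|\mathcal L w\|$ small, together with bounded $\|Dg\|$, propagates a small mean norm into a pointwise bound, up to a loss that is either a fixed factor absorbed into $\theta$ or a subexponential number of extra pieces absorbed into $nd\,C_q$. If that comparison only holds on blocks, I would first cut the orbit into blocks of bounded length, at cost $O(nd)$ in the exponent. I expect making this comparison lossless enough to preserve $c_q\to1$ to be the delicate point.
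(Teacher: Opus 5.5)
There is a genuine gap, and it sits at the point you flagged as the main obstacle. It is not a final clean-up step, though: it already breaks your iteration at the second scale.

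Your one-step bound $\|\cL_y w\|_W\le K_q\rho^{1+\alpha}$ uses that $z$ lies in a \emph{Bowen} ball of radius $\rho$. The H\"older remainder is controlled by $\max_k\|w_k\|^{\alpha}\cdot\|w\|_V$, and a bound on $\|w\|_V$ alone does not suffice: by Jensen, $\frac{1}{n+1}\sum_k\|w_k\|^{2+2\alpha}\ge\|w\|_V^{2+2\alpha}$, which is the wrong direction. The pieces your step produces are small only in $\overline d_n$; their $d_n$-diameter is still of order $\rho$. So at the next scale you only know $\|\cL_{y'}w\|_W\lesssim K_q\rho^{\alpha}\|w\|_V$, not $K_q(\theta_q\rho)^{\alpha}\|w\|_V$, where $\theta_q$ is your scale ratio. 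The singular-value threshold therefore stays near its initial value $K_q\varepsilon^{\alpha}/\theta_q$. Over the $\log(\varepsilon/\delta)/\log(1/\theta_q)$ scales, the coefficients of $n$ and of $\Delta_n$ then grow with $\log(1/\delta)$, which the theorem forbids.

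The repair you sketch does not work. The orbit constraint does not turn mean smallness into pointwise smallness. Orbits in a $\overline d_n$-ball of radius $\rho'$ can make excursions to distance $\sim\rho$ on a set of times of density up to $(\rho'/\rho)^2$, and $\cL$ does not exclude this, all the more when $Dg$ is non-invertible. Cutting into blocks does not help either, since global mean smallness gives no control on individual blocks.

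The missing idea is to apply the comparison Lemma \ref{Lem:key1} at \emph{every} scale, before each linear step; Lemma \ref{Lem:Key2} assumes both $\diam_{\overline d_n}(Z)\le r$ and $\diam_{d_n}(Z)\le r^{\theta}$. That lemma is combinatorial, not dynamical. By Chebyshev at most $(n+1)r^{1-\theta}$ times are bad, and one chooses these times and covers there separately.

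Its cost is neither a fixed factor nor subexponential. It produces $e^{(n+1)\cH_s}$ pieces, exponential in $n$ with a small rate. It also loses a \emph{power} of the scale, $r\mapsto r^{\theta}$, with an exponent $\theta<1$ (not your ratio). Two further things are then needed, both absent from your plan:
\begin{enumerate}
\item The rates must be summable over all scales with a bound independent of $\delta$; this is Lemma \ref{Lem:sum2}, $\sum_s\cH_s\le 6d\lambda$.
\item The power loss turns the threshold into $4He^{\lambda}r^{\alpha\theta}$ and the step in Lemma \ref{Lem:sum1} into $\chi=\alpha\lambda\theta$. The coefficient of $\Delta_n$ then becomes about $1/(\alpha\theta)$, so $\theta\to1$ is forced and must be balanced against that summability.
\end{enumerate}
The paper does this with $\lambda=\sqrt q$ and $\theta=1-q^{-1/2}$, so that $(1-\theta)\lambda=1$. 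This gives $c_q=\log(1+8e^{\sqrt q})/(\sqrt q-1)\to1$ and $C_q(\alpha)=O(\sqrt q)$.

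A smaller, fixable point: after recentering, your count mixes singular values of different operators $\cL_{y_s}$, so a single application of Lemma \ref{Lem:A1} does not apply directly. The paper bounds the branching at each node by an infimum over the node and evaluates along each branch at a point of the final leaf, which gives $\sup_z\sum_s m_z^n(\cdot)$.
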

\begin{Remark}
	The three constants $c_q$, $C_q(\alpha)$, and $D_q(d,\varepsilon,\delta)$ all have explicit expressions, which are given in \eqref{eq:dofcq} and \eqref{eq:dofDq}.
	It is clear from these formulas that $c_q\to 1$, $C_q(\alpha)/q\to 0$ as $q\to \infty$, and that $D_q(d,\varepsilon,\delta)$ is independent of $x,n,\alpha$ and $f$.
\end{Remark}

In the following subsections, we prove this theorem. 
We first work in an abstract setting involving a sequence of maps between Euclidean inner product spaces.

\subsection{Mean Bowen metric}\label{Sec:3.1}
Let $\cE=\big\{\big(E_k, (\cdot,\cdot)_k\big) \big\}_{k=0}^{\infty}$ be a sequence of $d$-dimensional Euclidean inner product spaces. 
For each $k$, we denote by $\|\cdot\|_k$ the norm induced by $(\cdot,\cdot)_k$ on $E_k$.
Fix constants $0<r_0<1/4$ and $H>1$. 
We assume that 
$$\cF:=\big\{F_k:\{v\in E_k:~\|v\|_{k}\leq r_0\} \to E_{k+1}\big\}_{k=0}^{\infty}$$ 
is a sequence of maps with the following conditions:
\begin{itemize}
	\item  each $F_k$ is a $\cC^{1,\alpha}$ map, $F_k(0_k)=0_{k+1}$ and $\|F_k\|_{\cC^{1,\alpha}}\leq H$. 
\end{itemize}
For each $k$, by $\|F_k\|_{\cC^{1,\alpha}}\leq H$, we have
\begin{equation}\label{eq:C1alpha}
	\|F_k(v)-F_k(u)-D_uF_k(v-u)\|_{k+1} \leq H\cdot \|u-v\|^{1+\alpha}_{k},~\forall u,v\in \{t\in E_k:\|t\|_k\leq r_0\}. 
\end{equation}
Let $F^0=\Id|_{E_0}$ and let $F^n=F_{n-1}\circ\cdots\circ F_0$ for $n\geq1$.
Define the set 
\begin{equation}\label{eq:dofOmega}
	\Omega_n=\{v\in E_0: \|F^{k}(v)\|_{k} \leq r_0,~\forall 0\leq k\leq n\}.
\end{equation}
Then, $F^n$ is well-defined on $\Omega_n$. 
For each $x,y\in \Omega_n$, consider two metrics
\begin{equation}\label{eq:twometric}
	d_n(x,y)=\max_{0\leq j\leq n}\|F^j(x)-F^j(y)\|_{j},~~\overline{d}_n(x,y)=\Big(\frac{1}{n+1}\sum_{j=0}^{n}\|F^j(x)-F^j(y)\|_{j}^2\Big)^{1/2}.
\end{equation}
The reason we introduce the mean metric $\overline{d}$ here is that it is induced by an inner product.
For each $Z \subset \Omega_n$, define 
\begin{align*}
	r_n\big(Z,\delta,\cF\big):&=\min  \Big\{N:~\exists\{Z_i\}_{i=1}^{N}~\text{s.t.}~\bigcup_{i=1}^{N} Z_{i}= Z~\text{and}~\diam_{d_n}(Z_i)\leq \delta \Big\} \\
	\overline{r}_n\big(Z,\delta,\cF\big):&=\min  \Big\{N:~\exists\{Z_i\}_{i=1}^{N}~\text{s.t.}~\bigcup_{i=1}^{N} Z_{i}=Z~\text{and}~\diam_{\overline{d}_n}(Z_i)\leq \delta \Big\}.
\end{align*}
The following lemma gives a comparison between the mean metric and the Bowen metric.
\begin{Lemma}\label{Lem:key1}
	For every $0<\theta<1$ and every $0<r<r_0<1/4$ satisfying $r^{1-\theta}\leq 1/4$, and for every 
	$Z\subset \Omega_n$ with $\operatorname{diam}_{\overline{d}_n}(Z)\leq r$, we have
	$$\log r_n(Z,r^{\theta},\cF)\leq (n+1)\cdot \big( \HH(r^{1-\theta})+ dr^{1-\theta}\log (1+r^{-\theta})\big),$$
	where $\HH(t)=-t\log t-(1-t)\log (1-t)$.
\end{Lemma}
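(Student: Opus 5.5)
The plan is a Chebyshev/Markov counting argument combined with a coordinatewise covering. If $\operatorname{diam}_{\overline d_n}(Z)\le r$, then for any fixed base point $z\in Z$ and any $y\in Z$ we have
$$\frac{1}{n+1}\sum_{j=0}^n\|F^j(y)-F^j(z)\|_j^2\le r^2.$$
Consequently the set of "bad" times
$$J(y):=\{j:\|F^j(y)-F^j(z)\|_j> r^{\theta}\}$$
satisfies $\#J(y)\cdot r^{2\theta}\le (n+1)r^2$, so $\#J(y)\le (n+1)r^{2(1-\theta)}\le (n+1)r^{1-\theta}$; the last step uses $r<1$. On the good times $j\notin J(y)$ the Bowen coordinate is already within $r^\theta$ of $F^j(z)$.

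First, partition $Z$ according to the subset $J(y)\subset\{0,\dots,n\}$. The number of subsets of cardinality at most $t(n+1)$ with $t=r^{1-\theta}\le 1/4\le 1/2$ is bounded by $e^{(n+1)\HH(t)}$, by the standard binomial entropy estimate. This produces the term $(n+1)\HH(r^{1-\theta})$.

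Second, fix a pattern $J$ and consider $Z_J=\{y\in Z: J(y)=J\}$. For $j\in J$ we still have the crude bound $\|F^j(y)-F^j(z)\|_j\le \sqrt{n+1}\,r$. That bound is useless because of the factor $\sqrt{n+1}$, so instead I will use $\|F^j(y)\|_j\le r_0$ and $\|F^j(z)\|_j\le r_0$, which give $\|F^j(y)-F^j(z)\|_j\le 2r_0<1$. In the $d$-dimensional space $E_j$, Lemma \ref{Lem:Acover} lets me cover the ball of radius $1$ (or $2r_0$) by at most $(1+2/r^{\theta})^d$ balls of radius $r^\theta$. To stay within the claimed bound $(1+r^{-\theta})^d$ I will cover balls of radius $r_0\le 1/4$ around the points $F^j(z)$ with balls of radius $r^\theta/2$, or equivalently arrange the radii so that the count per bad coordinate is $(1+r^{-\theta})^d$. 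For each $j\in J$, assign to each $y$ the index of a covering ball containing $F^j(y)$. The sets with identical index vectors over $J$ number at most $(1+r^{-\theta})^{d\#J}\le \exp\big((n+1)d r^{1-\theta}\log(1+r^{-\theta})\big)$.

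Third, check diameters. Two points in the same piece are within $2r^\theta$ at good coordinates through $F^j(z)$, and within the covering radius at bad coordinates. This gives a factor-$2$ problem at good times. To fix it I will choose the thresholds with $r^\theta/2$, or compare directly to the center $z$ and cover pieces by sets of $d_n$-diameter $\le r^\theta$ by halving the radii. The slack in the constants (the use of $r^{2(1-\theta)}\le r^{1-\theta}$ and $2r_0\le 1/2$) is meant to absorb this.

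I expect this bookkeeping of constants to be the main obstacle: matching the exact expression in the statement, $\HH(r^{1-\theta})+dr^{1-\theta}\log(1+r^{-\theta})$, given the diameter-versus-radius factor of $2$. If it fails, I will redefine the bad set with threshold $r^\theta/2$, which gives $\#J\le 4(n+1)r^{2(1-\theta)}\le (n+1)r^{1-\theta}$ because $r^{1-\theta}\le1/4$, and that absorbs the factor. Multiplying the two counts and taking logarithms then gives the lemma.
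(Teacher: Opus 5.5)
Your final version (bad times defined by the threshold $r^{\theta}/2$, so that $\#J(y)\le 4(n+1)r^{2(1-\theta)}\le (n+1)r^{1-\theta}$, a binomial-entropy count over bad-time patterns, and a covering by balls of radius $r^{\theta}/2$ at the bad times) is essentially the paper's proof. One correction is needed: at bad times you should cover the ball $B(0_j,r_0)$, which contains $F^j(y)$ because $y\in\Omega_n$, rather than a ball of radius $r_0$ around $F^j(z)$, which need not contain $F^j(y)$; Lemma \ref{Lem:Acover} then gives at most $(1+4r_0r^{-\theta})^d\le(1+r^{-\theta})^d$ balls, since $r_0<1/4$.
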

\begin{proof}
By Lemma \ref{Lem:Acover}, every ball $B(0_k,r_0)\subset E_k$ can be covered by at most $(1+4r_0r^{-\theta})^d$ balls of radius $r^\theta/2$, whose centers lie in $B(0_k,r_0)$.
Since $0<r_0<1/4$, there are at most $(1+r^{-\theta})^d$ such balls.
Fix a point $x\in Z$.
Then, for each $y\in Z$, by $r^{1-\theta}\leq 1/4$ we have
$$\#\big\{0\leq i\leq n: \|F^i(y)-F^i(x)\|>r^{\theta}/2\big\}\leq\frac{4(n+1)r^2}{r^{2\theta}}=4(n+1)r^{2(1-\theta)}\leq (n+1)r^{1-\theta}.$$
We first select the times at which the distance is large and then cover the corresponding points by balls of radius $r^\theta/2$. 
Therefore,
\begin{align*}
	\log r_n(Z,r^{\theta},\cF)
	&\leq \log~\sum_{j=0}^{\lfloor (n+1)r^{1-\theta}\rfloor} \binom{n+1}{j} \cdot (1+r^{-\theta})^{dj} \\
	&\leq (n+1)\cdot \big( \HH(r^{1-\theta})+ dr^{1-\theta}\log (1+r^{-\theta})\big),
\end{align*}
where, in the second inequality above, we have used the estimate
$$\sum_{j=0}^{n\kappa}\binom{n}{j}\le e^{n\mathbb H(\kappa)},~~~\forall n\in\mathbb N,~~\forall 0\le \kappa\leq 1/2.$$
This completes the proof of the lemma.
\end{proof}

\subsection{The key covering number estimate}\label{Sec:CofX}

Let $V_n=\bigoplus_{k=0}^n E_k$ and $W_n=\bigoplus_{k=1}^n E_k$.
For $v=(v_0,\cdots,v_n),~v'=(v'_0,\cdots,v'_n)\in V_n$ and $w=(w_1,\cdots,w_n),~w'=(w'_1,\cdots,w'_n)\in W_n$,
define the inner products 
$$(v,v')_{V}= \frac{1}{n+1}\sum_{k=0}^{n} (v_k,v'_k)_k,~~(w,w')_{W} = \frac{1}{n+1} \sum_{k=1}^{n} (w_k,w'_k)_k.$$
For each $x\in \Omega_n$, define a linear map $\cL_{x}:V_n\to W_n$ by
\begin{equation}\label{eq:cLx}
	\cL_{x}(v_0,\cdots,v_n):=\big(v_1-D_xF_0(v_0),~v_2-D_{F^1(x)}F_1(v_1),~\cdots,~v_n-D_{F^{n-1}(x)}F_{n-1}(v_{n-1})\big).
\end{equation} 
Note that $\cL_{x}$ is onto (see Section \ref{SEC:3}).
Hence, $\dim {\rm rank}(\cL_x)=nd$, $\dim \ker(\cL_x)=d$ and $\cL_x$ has $nd$-nonzero singular values.
Denote by $\sigma_1(x)\geq \sigma_2(x)\geq \cdots \geq \sigma_{nd}(x)>0$ all the non-zero singular values of $\cL_x$.
For each $a>0$, define
\begin{equation}\label{eq:dofmx}
	m_{x}^n(a):=\# \big\{1\leq j \leq nd:0<\sigma_{j}(x)\leq a \big\}.
\end{equation}

\begin{Lemma}\label{Lem:Key2}
	Fix constants $\lambda>0$ and $\theta\in (0,1)$. 
	Then, for every $0<r<r_{0}$ and every $Z\subset \Omega_n$ with $\diam_{\overline{d}_n}(Z)\leq r$ and $\diam_{d_n}(Z)\leq r^{\theta}$, we have
	$$\log \overline{r}_n\big(Z,re^{-\lambda},\cF\big)\leq  \inf_{x\in Z}\big(d+m_x^n(4He^{\lambda}r^{\alpha\theta})\big)\cdot \log(1+8e^\lambda).$$
\end{Lemma}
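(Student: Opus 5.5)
Fix $x\in Z$; since the right-hand side is an infimum over $x$, it suffices to prove the bound for this single $x$. The plan is to lift $Z$ into $V_n$ through the orbit embedding
$$\iota(y):=\big(y-x,\;F^1(y)-F^1(x),\;\dots,\;F^n(y)-F^n(x)\big)\in V_n .$$
Then $\overline d_n(y,y')=\|\iota(y)-\iota(y')\|_V$. So $\iota(Z)$ has $\|\cdot\|_V$-diameter at most $r$ and lies in the ball $B_V(0,r)$. It therefore suffices to cover $\iota(Z)$ by sets of $\|\cdot\|_V$-diameter at most $re^{-\lambda}$, with the logarithm of their number bounded by $(d+m)\log(1+8e^\lambda)$, where $m:=m_x^n(4He^\lambda r^{\alpha\theta})$.

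\textbf{Step 1: the image under $\cL_x$ is small.} For $y\in Z$ write $u_k=F^k(y)-F^k(x)$. By \eqref{eq:C1alpha},
$$\|u_{k+1}-D_{F^k(x)}F_k(u_k)\|_{k+1}\le H\|u_k\|_k^{1+\alpha}\le Hr^{\alpha\theta}\|u_k\|_k,$$
where the last step uses $\|u_k\|_k\le \diam_{d_n}(Z)\le r^\theta$. Summing squares with the weight $1/(n+1)$ gives $\|\cL_x\iota(y)\|_W\le Hr^{\alpha\theta}\|\iota(y)\|_V\le Hr^{1+\alpha\theta}$. Hence $\iota(Z)\subset B_V(0,r)\cap \cL_x^{-1}\big(B_W(0,Hr^{1+\alpha\theta})\big)$, and this set is linear-algebraic, so nonlinearity plays no further role.

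\textbf{Step 2: singular value decomposition.} Decompose $V_n$ orthogonally as $\ker\cL_x\oplus S\oplus L$. Here $\ker\cL_x$ has dimension $d$. The space $S$ is spanned by the right singular vectors with $\sigma_j\le a:=4He^\lambda r^{\alpha\theta}$, so $\dim S=m$. The space $L$ is spanned by the right singular vectors with $\sigma_j>a$. For $v\in\iota(Z)$ let $v_L$ be its component in $L$. Then
$$\|v_L\|_V\le a^{-1}\|\cL_x v\|_W\le \frac{Hr^{1+\alpha\theta}}{4He^\lambda r^{\alpha\theta}}=\frac{re^{-\lambda}}{4}.$$
So every point of $\iota(Z)$ lies within $re^{-\lambda}/4$ of its projection onto the $(d+m)$-dimensional subspace $\ker\cL_x\oplus S$. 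That projection lies in a ball of radius $r$ in this subspace.

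\textbf{Step 3: covering.} Apply Lemma \ref{Lem:Acover} in $\ker\cL_x\oplus S$ with $b=r$ and $a=re^{-\lambda}/4$. This gives at most $(1+8e^\lambda)^{d+m}$ balls of radius $re^{-\lambda}/4$ covering the projection of $\iota(Z)$.

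Pull each ball back to $Z$: collect the points of $Z$ whose projections fall in the ball. Two such points have projections within $re^{-\lambda}/2$ of each other. Their $L$-components each have norm at most $re^{-\lambda}/4$, so these components differ by at most $re^{-\lambda}/2$. By orthogonality the two points are therefore within $\sqrt{(re^{-\lambda}/2)^2+(re^{-\lambda}/2)^2}<re^{-\lambda}$ of each other. This gives the stated bound, since $\overline r_n$ only needs the pieces to cover $Z$.

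\textbf{Main obstacle.} The main check is Step 1. It must use the Bowen-diameter hypothesis $\diam_{d_n}(Z)\le r^\theta$ to bound the nonlinear error pointwise at every time $k$, while using the mean diameter $r$ to control the overall size. The constant $4$ in $a$ is chosen exactly so that the off-subspace error is absorbed into $re^{-\lambda}$. If the constants are slightly off, I would adjust the covering radius in Step 3; the exponent $(d+m)$ and the base $1+8e^\lambda$ come directly from Lemma \ref{Lem:Acover}.
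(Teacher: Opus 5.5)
Your proof is correct and follows the paper's argument essentially step for step. You use the same orbit lift based at a fixed $x\in Z$, the same pointwise $\mathcal C^{1,\alpha}$ bound giving $\|\cL_x v\|_W\le Hr^{1+\alpha\theta}$, the same orthogonal splitting into $\ker\cL_x\oplus V_x^+$ (your $\ker\cL_x\oplus S$) and its complement, and the same application of Lemma \ref{Lem:Acover} with radius $re^{-\lambda}/4$. The only cosmetic difference is that you combine the two components by orthogonality, getting $\sqrt{2}\,re^{-\lambda}/2$, where the paper uses the triangle inequality and gets exactly $re^{-\lambda}$.
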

\begin{proof}
	Fix a point $x\in Z$. For each $z\in Z$, define
	$$v(z):=\big(z-x,~F^1(z)-F^1(x),~\cdots,~F^{n}(z)-F^n(x)\big)\in V_n.$$
	Then, we have $\|v(z)\|_{V}\leq r$.
	Write $\cL_x(v(z))=w(z)=(w(z)_1,\cdots,w(z)_n)$, where 
	$$ w(z)_i=(F_{i-1}-D_{F^{i-1}(x)}F_{i-1})(F^{i-1}(z))-(F_{i-1}-D_{F^{i-1}(x)}F_{i-1})(F^{i-1}(x)).$$
	By \eqref{eq:C1alpha}, we have
	$$ \|w(z)_i\|_{i}\leq  H\cdot \|F^{i-1}(z)-F^{i-1}(x)\|_{i-1}^{1+\alpha} \leq H \cdot r^{\theta\alpha}\cdot  \|F^{i-1}(z)-F^{i-1}(x)\|_{i-1}.$$
	Therefore, we have $\|\cL_x(v(z))\|_{W}\leq H \cdot r^{\theta\alpha} \|v(z)\|_{V}\leq H \cdot r^{\alpha\theta}\cdot r$.
	Denote by $V_x^{+}$ the right singular spaces of $\cL_x$ corresponding to singular values in $(0,~4He^{\lambda}r^{\alpha\theta}]$, and let 
	$$V_{x,n}^s:=\ker (\cL_x) \oplus V_x^{+}.$$
	Denote by $V_{x,n}^u:=(V_{x,n}^s)^{\perp}$ the orthogonal complement of $V_{x,n}^s$ in $V_n$.
	
	Write $v=v_s+v_u\in V_{x,n}^s\oplus V_{x,n}^u$ for each $v\in V_n$.
	If $v\in V_n$ satisfies $\|\cL_x(v)\|_{W}\leq H \cdot r^{\alpha\theta+1}$, then by $\cL_x(V_{x,n}^s) \perp  \cL_x(V_{x,n}^u)$, it follows that
	$$H \cdot r^{\alpha\theta+1}\geq\|\cL_x(v)\|_{W} \geq \|\cL_x(v_u)\|_{W}\geq 4He^{\lambda}r^{\alpha\theta}\|v_u\|_{V}~\Rightarrow~\|v_u\|_{V}\leq e^{-\lambda}r/4.$$
	By Lemma \ref{Lem:Acover}, there exists $z_1,\cdots,z_N\in V_{x,n}^s$ such that 
	$$\{v\in V_{x,n}^s: \|v\|_{V}\leq r\}\subset \bigcup_{i=1}^{N} \big\{v\in V_{x,n}^s: \|v-z_i\|_{V}\leq e^{-\lambda}r/4\big\},~~N\leq \Big(1+8e^{\lambda}\Big)^{\dim(V_{x,n}^s)}.$$
	For each $1\leq i\leq N$, we define
	$$K_{i}:=\big\{z\in Z:~\|v(z)_s-z_i\|_{V}\leq e^{-\lambda}r/4  \big\}.$$
	Since $\|v(z)_s\|_{V}\leq \|v(z)\|_{V} \leq r$ for each $z\in Z$, it follows that $\bigcup_{i=1}^{N}K_i=Z$.
	For each $z_1,z_2\in K_i$,
	\begin{align*}
	\overline{d}_n(z_1,z_2)=\|v(z_1)-v(z_2)\|_{V}
		&\leq \|v(z_1)_s-v(z_2)_s\|_{V}+\|v(z_1)_u-v(z_2)_u\|_{V} \\
		&\leq \frac{1}{2} e^{-\lambda}r+\frac{1}{2} e^{-\lambda}r\leq e^{-\lambda}r.
	\end{align*}
	Hence, $\diam_{\overline{d}_n}(K_i)\leq e^{-\lambda}r$.
	Note that $\dim(V_{x,n}^s)=d+ m_x^n(4He^{\lambda}r^{\alpha\theta})$.
	Therefore, we have 
	$$\log \overline{r}_n\big(Z,re^{-\lambda},\cF\big)\leq \big(d+m_x^n(4He^{\lambda}r^{\alpha\theta})\big)\cdot \log(1+8e^\lambda).$$
	Since $x\in Z$ was arbitrary, this completes the proof of the lemma.
\end{proof}

\begin{Proposition}\label{Prop:Cover-md}
	Fix constants $\lambda>0$ and $\theta\in (0,1)$.
	For every $r>0$ with $4r^{1-\theta}\leq 1$ and $4He^{\lambda}r^{\alpha \theta}\leq 1$, every $Z\subset \Omega_n$ with $\diam_{\overline{d}_n}(Z)\leq r$, and every $L>0$, there exist $N_L$ and $\{Z^L_i\}_{i=1}^{N_L}$ such that  
    $\bigcup_{i=1}^{N_L} Z_i^L=Z$, $\diam_{\overline{d}_n}(Z_i^L)\leq re^{-L\lambda}$ for every $1\leq i\leq N_L$, and
	\begin{align*}
		\log N_L \leq \log(1+8e^\lambda)\cdot &\Big(Ld+\sup_{z\in Z}~\sum_{s=0}^{L-1}~m_z^n\big(4He^{\lambda}(re^{-s\lambda})^{\alpha \theta}\big)  \Big)+ \\
		& \sum_{s=0}^{L-1}(n+1)\cdot \Big( \HH\big((re^{-s\lambda})^{1-\theta}\big)+ d\cdot (re^{-s\lambda})^{1-\theta}\log (1+ (re^{-s\lambda})^{-\theta})\Big).
	\end{align*}
\end{Proposition}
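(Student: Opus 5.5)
The plan is induction on $L$, iterating one refinement step that combines Lemma~\ref{Lem:key1} with Lemma~\ref{Lem:Key2}. Set $r_s:=re^{-s\lambda}$ for $s\ge0$. The hypotheses $4r^{1-\theta}\le1$ and $4He^\lambda r^{\alpha\theta}\le1$ are inherited by every $r_s\le r$. In particular each $r_s$ satisfies $r_s^{1-\theta}\le 1/4$, which is what Lemma~\ref{Lem:key1} needs. We may also assume $r<r_0$: since $Z\subset\Omega_n$ gives $\overline d_n$-diameter at most $2r_0$, the case $r\ge 2r_0$ can be handled by lowering $r$, and I expect this to be harmless bookkeeping.

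\textbf{One step.} Let $Y\subset Z$ with $\diam_{\overline d_n}(Y)\le r_s$.
\begin{itemize}
\item First apply Lemma~\ref{Lem:key1} with radius $r_s$. This splits $Y$ into at most $\exp\big((n+1)(\HH(r_s^{1-\theta})+dr_s^{1-\theta}\log(1+r_s^{-\theta}))\big)$ pieces, each of $d_n$-diameter at most $r_s^\theta$.
\item Each piece still has $\overline d_n$-diameter at most $r_s$, because it is a subset of $Y$. So Lemma~\ref{Lem:Key2} applies with $r=r_s$. It splits each piece $Y'$ into at most $\exp\big(\log(1+8e^\lambda)\,(d+m^n_{z}(4He^\lambda r_s^{\alpha\theta}))\big)$ sets of $\overline d_n$-diameter at most $r_se^{-\lambda}=r_{s+1}$.
\item Here $z\in Y'$ is any point, since Lemma~\ref{Lem:Key2} gives an infimum over $Y'$. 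Hence the $m$-count is bounded by $\sup_{z\in Z} m^n_z(4He^\lambda r_s^{\alpha\theta})$.
\end{itemize}
Starting from $Z$ itself ($s=0$) and applying this step to every piece at each stage $s=0,\dots,L-1$ produces a cover of $Z$ by sets of diameter at most $re^{-L\lambda}$. The total count is bounded by the product of the per-step factors.

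\textbf{Where the sup sits.} The statement places the supremum outside the sum, $\sup_z\sum_s m_z^n(\cdots)$. The product of per-step sups only gives $\sum_s\sup_z$, which is weaker. To fix this I will track the chosen point along a chain of nested pieces $Z\supset Y_0\supset Y_1\supset\cdots$.
\begin{itemize}
\item When refining a piece at level $s$, Lemma~\ref{Lem:Key2} lets us use any $x$ in that piece. We instead fix in advance, for each final leaf, a point $z$ lying in it. Since the leaf is contained in all its ancestors, $z$ belongs to every ancestor.
\item Equivalently, we count the tree by building it top-down. The number of children of a node is at most $\exp(\log(1+8e^\lambda)(d+m^n_{x}))$ for any $x$ in the node, in particular for any $x$ in any descendant leaf.
\item The number of leaves is bounded by $\max$ over root-to-leaf paths of the product of branching bounds along the path. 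This holds if we use, at each node, a bound that is valid, and then bound the leaf count recursively via a maximum over children.
\item Concretely, I prove by downward induction on $s$: a node $Y$ at level $s$ has at most $\sup_{z\in Y}\prod_{t\ge s}(\text{factor}_t\text{ at }z)$ leaves, where the Lemma~\ref{Lem:key1} factors do not depend on $z$.
\item The inductive step works as follows. The number of children of $Y$ is at most the factor evaluated at any $z\in Y$. Each child has at most $\sup_{z\in\text{child}}\prod_{t>s}$ leaves. Choosing $z$ to be a point nearly attaining the sup over the child carrying the largest count gives the claim. This works since the child-count bound at level $s$ may be evaluated at that same $z\in Y$.
\end{itemize}

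Taking logarithms gives exactly the stated bound, with the $(n+1)\big(\HH(\cdot)+\dots\big)$ terms summed over $s$. The main obstacle is this bookkeeping of placing the supremum outside the sum. The covering estimates themselves are direct applications of the two lemmas.
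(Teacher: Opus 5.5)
Your proposal is correct and is essentially the paper's proof. Both run $L$ rounds of Lemma \ref{Lem:key1} (cutting to $d_n$-diameter $r_s^{\theta}$) followed by Lemma \ref{Lem:Key2} (cutting to $\overline{d}_n$-diameter $r_{s+1}$), and both keep the supremum outside the sum by evaluating the $m$-terms along a nested chain of pieces, which is what the paper's $\max_{(i,j)}\inf_{z\in X^2_{i,j}}\sum_s m_z^n$ step does.

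Two small remarks. First, in your downward induction the $z$-dependent factor must be charged to the Lemma \ref{Lem:key1}-piece containing $z$, taking the maximum over those pieces first; it cannot be charged to the whole level-$s$ node. Second, the reduction to $r<r_0$ is unnecessary, since neither lemma's proof actually uses that hypothesis.
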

\begin{proof}
	First we consider the case $L=1$. 
	By Lemma \ref{Lem:key1}, there exist $X_{1}^1,X_{2}^1,\cdots,X_{N'_1}^1\subset Z$ such that 
	$\log N'_1\leq n\cdot \big( \HH(r^{1-\theta})+ dr^{1-\theta}\log (1+r^{-\theta})\big)$,
	$$\bigcup_{i=1}^{N'_1}X_{i}^1=Z,~~\diam_{\overline{d}_n}(X_i^1)\leq r,~~\diam_{d_n}(X_i^1)\leq r^{\theta},~~\forall 1\leq i \leq N'_1.$$
	For each $1\leq i \leq N'_1$, by Lemma \ref{Lem:Key2}, there exist $Z^1_{i,1},\cdots,Z^1_{i,N'_{1,i}}\subset X_i^1$ such that 
	$$\forall j,~\diam_{\overline{d}_n}(Z_{i,j}^1)\leq re^{-\lambda},~~\bigcup_{j=1}^{N'_{1,i}} Z_{i,j}^1=X_{i}^1,~~
	  \log N'_{1,i}\leq \inf_{x\in X_i^1}\big(d+m_x^n(4He^{\lambda}r^{\alpha\theta})\big)\cdot \log(1+8e^\lambda).$$
	Let $\{Z_1^1,\cdots Z^1_{N_{1}}\}=\bigcup_{i=1}^{N'_1}\{Z_{i,1}^1,\cdots,Z_{i,N'_{1,i}}^1\}$.
	Then, $\diam_{\overline{d}_n}(Z^1_i)\leq re^{-\lambda}$ and 
	\begin{align*}
		\log N_1
		&\leq (n+1)\cdot \big( \HH(r^{1-\theta})+ dr^{1-\theta}\log (1+r^{-\theta})\big)+\max_{i} \inf_{x\in X_i^1}\big(d+m_x^n(4He^{\lambda}r^{\alpha\theta})\big)\cdot \log(1+8e^\lambda).
	\end{align*}
	Using Lemma \ref{Lem:key1} for each $Z_i^1$, we obtain the families $\{X_{i,j}^2\}_{j=1}^{N'_{2,i}}$ such that
	$\bigcup_{j=1}^{N'_{2,i}}X_{i,j}^2=Z^1_i$, and for every $j$, $\diam_{d_n}(X_{i,j}^2)\leq (re^{-\lambda})^{\theta}$, while
	\begin{align*}
		\log N'_{2,i} \leq (n+1)\cdot \big( \HH((re^{-\lambda})^{1-\theta})+ d(re^{-\lambda})^{1-\theta}\log (1+(re^{-\lambda})^{-\theta})\big).
	\end{align*}
	Then, by Lemma \ref{Lem:Key2}, there exists a covering $Z^2_{i,j,1},\cdots,Z^2_{i,j,N'_{2,i,j}}\subset X_{i,j}^2$, such that, for every $t$, $\diam_{\overline{d}_n}(Z^2_{i,j,t})\leq re^{-2\lambda}$ and $\log N'_{2,i,j}\leq \inf_{x\in X_{i,j}^2} \big(d+m_x^n(4He^{\lambda}(re^{-s\lambda})^{\alpha\theta})\big)\cdot \log(1+8e^\lambda)$.
	Then, $\{Z^2_i\}_{i=1}^{N_2}$ is defined as the collection of all $\{Z_{i,j,t}^2:1\leq i\leq N_1,~1\leq j\leq N'_{2,i},~1\leq t\leq N'_{2,i,j}\}$. 
	Hence, 
	\begin{align*}
		\log N_2 \leq~\log(1+8e^\lambda)\cdot &\Big(2d+\max_{(i,j)}\inf_{z\in X_{i,j}^2}\sum_{s=0}^{1}~m_z^n\big(4He^{\lambda}(re^{-s\lambda})^{\alpha \theta}\big)  \Big)+ \\
		& \sum_{s=0}^{1}(n+1)\cdot \Big( \HH\big((re^{-s\lambda})^{1-\theta}\big)+ d\cdot (re^{-s\lambda})^{1-\theta}\log (1+ (re^{-s\lambda})^{-\theta})\Big)\\
		\leq~\log(1+8e^\lambda)\cdot &\Big(2d+\sup_{z\in Z}\sum_{s=0}^{1}~m_z^n\big(4He^{\lambda}(re^{-s\lambda})^{\alpha \theta}\big)  \Big)+ \\
		& \sum_{s=0}^{1}(n+1)\cdot \Big( \HH\big((re^{-s\lambda})^{1-\theta}\big)+ d\cdot (re^{-s\lambda})^{1-\theta}\log (1+ (re^{-s\lambda})^{-\theta})\Big).
	\end{align*}
	After repeating this procedure $L$ times, we obtain $N_L$ and a family $\{Z_i^L\}_{i=1}^{N_L}$ satisfying the conclusion of the proposition.
\end{proof}

In Proposition \ref{Prop:Cover-md}, we only obtain that the set is small with respect to the mean metric $\overline{d}_n$. 
However, applying Lemma \ref{Lem:key1} once again immediately yields the following conclusion:
\begin{Corollary}\label{Cor:Cover-dn}
	Fix two constants $\lambda>0$ and $\theta\in (0,1)$.
	For every $r>0$ with $4r^{1-\theta}\leq 1$ and $4He^{\lambda}r^{\alpha \theta}\leq 1$, every $Z\subset \Omega_n$ with $\diam_{\overline{d}_n}(Z)\leq r$, and every $L>0$, we have
	\begin{align*}
		\log r_n\big(Z,(re^{-L\lambda})^{\theta},\cF\big) \leq \log&(1+8e^\lambda)\cdot \Big(Ld+\sup_{z\in Z}~\sum_{s=0}^{L-1}~m_z^n\big(4He^{\lambda}(re^{-s\lambda})^{\alpha \theta}\big)  \Big)+ \\
		& (n+1)\cdot \sum_{s=0}^{L}\Big( \HH\big((re^{-s\lambda})^{1-\theta}\big)+ d\cdot (re^{-s\lambda})^{1-\theta}\log \big(1+ (re^{-s\lambda})^{-\theta}\big)\Big).
	\end{align*}
\end{Corollary}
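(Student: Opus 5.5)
The plan is to apply Proposition \ref{Prop:Cover-md} to $Z$ and then post-process each resulting piece with Lemma \ref{Lem:key1}. Proposition \ref{Prop:Cover-md} yields a cover $Z=\bigcup_{i=1}^{N_L}Z_i^L$ with $\diam_{\overline{d}_n}(Z_i^L)\leq re^{-L\lambda}$, and $\log N_L$ is bounded by exactly the first line of the claimed estimate plus the entropy terms summed over $s=0,\dots,L-1$. The $d_n$-diameter is not yet controlled, so I will apply Lemma \ref{Lem:key1} to each $Z_i^L$ with the radius $r':=re^{-L\lambda}$ and the same $\theta$. This gives, for each $i$, a cover of $Z_i^L$ by sets of $d_n$-diameter at most $(r')^{\theta}=(re^{-L\lambda})^{\theta}$, numbering at most
$$\exp\Big((n+1)\big(\HH(r'^{\,1-\theta})+d\,r'^{\,1-\theta}\log(1+r'^{-\theta})\big)\Big).$$

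Next I collect all these pieces into a single cover of $Z$. Its cardinality is at most $N_L$ times the uniform bound above. Taking logarithms, the extra contribution is exactly the $s=L$ term of the entropy sum, so the sum over $s$ extends from $0,\dots,L-1$ to $0,\dots,L$. Because $r_n$ is a minimum over covers, this cardinality bounds $r_n\big(Z,(re^{-L\lambda})^{\theta},\cF\big)$ from above, and the corollary follows.

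The only point that needs checking is that the hypotheses of Lemma \ref{Lem:key1} hold at the new scale $r'$:
\begin{itemize}
\item We need $r'<r_0$. This is inherited from $r<r_0$, which is implicit in the setting of Proposition \ref{Prop:Cover-md}, since that proposition itself invokes Lemma \ref{Lem:key1} at scale $r$.
\item We need $r'^{\,1-\theta}\leq 1/4$. Since $r'\leq r$ and $1-\theta>0$, we have $r'^{\,1-\theta}\leq r^{1-\theta}\leq 1/4$ by the assumption $4r^{1-\theta}\leq 1$.
\end{itemize}
Beyond this verification the argument is bookkeeping, and I do not expect any genuine obstacle.
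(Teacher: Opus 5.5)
Your proposal is correct and is exactly the paper's argument: apply Proposition \ref{Prop:Cover-md}, then apply Lemma \ref{Lem:key1} once more at scale $re^{-L\lambda}$ to each resulting piece, which contributes the $s=L$ entropy term. Your check of the hypotheses at the new scale (monotonicity of $r^{1-\theta}$, and $r<r_0$ inherited from the setting) is the only point needing care. You handled it adequately; in fact the condition $r<r_0$ is never actually used in the proof of Lemma \ref{Lem:key1}.
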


\subsection{Estimate for the number of small singular values}
Next, we show that the complicated expression on the right-hand side of the inequality in Corollary \ref{Cor:Cover-dn} admits a good upper bound.
The key step is to estimate the number of small singular values.
Recall the definition of $m_x^n$ in \eqref{eq:dofmx}.
For each $x\in \Omega_n$, define 
\begin{equation}\label{eq:dofDF}
	\Delta_n^{\cF}(x):=\sum_{j=0}^{n-1}~\max_{1\leq k \leq d} \log^{+}~\|\wedge^k D_{F^j(x)}F_j \|-\max_{1\leq k \leq d}\log^{+} \|\wedge^k D_{x}F^n\|.
\end{equation}

\begin{Lemma}\label{Lem:sum1}
	For every $x\in \Omega_n$ and every $\chi>0$, we have
	$$\sum_{s=0}^{\infty}~m_x^n(e^{-s\chi})\leq (1+\chi^{-1})\cdot nd+\Delta_n^{\cF}(x)\cdot \chi^{-1}.$$
\end{Lemma}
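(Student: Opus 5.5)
Writing $\sigma_j=\sigma_j(x)$ for the nonzero singular values of $\cL_x$, we have $m_x^n(e^{-s\chi})=\#\{j:\sigma_j\le e^{-s\chi}\}$. Exchanging the order of summation gives
$$\sum_{s=0}^{\infty} m_x^n(e^{-s\chi})=\sum_{j=1}^{nd}\#\{s\ge 0:\ \sigma_j\le e^{-s\chi}\}.$$
For a fixed $j$, the condition $\sigma_j\le e^{-s\chi}$ is equivalent to $s\chi\le \log\sigma_j^{-1}$. If $\sigma_j>1$, no $s\ge0$ satisfies it and the count is $0$. If $\sigma_j\le1$, the count is $\lfloor \chi^{-1}\log\sigma_j^{-1}\rfloor+1$. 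In both cases the count is at most $1+\chi^{-1}\log^+\sigma_j^{-1}$. Summing over $j$,
$$\sum_{s=0}^{\infty} m_x^n(e^{-s\chi})\le nd+\chi^{-1}\sum_{j=1}^{nd}\log^{+}[\sigma_j(x)]^{-1}.$$

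Next apply Lemma \ref{Lem:A1} with $E_k$ the given spaces and $A_k=D_{F^k(x)}F_k:E_k\to E_{k+1}$. The operator $\cL$ of Section \ref{SEC:3} is then exactly $\cL_x$ in \eqref{eq:cLx}, with the same normalized inner products $(\cdot,\cdot)_V,(\cdot,\cdot)_W$. By the chain rule, $A_{n-1}\circ\cdots\circ A_0=D_xF^n$. By the definition of $\Phi$ via exterior powers, $\Phi(A_k)=\max_{1\le t\le d}\log^+\|\wedge^t D_{F^k(x)}F_k\|$. Lemma \ref{Lem:A1} therefore gives
$$\sum_{j=1}^{nd}\log^{+}[\sigma_j(x)]^{-1}\le nd+\Delta_n^{\cF}(x).$$

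Substituting this into the previous bound yields $nd+\chi^{-1}(nd+\Delta_n^{\cF}(x))=(1+\chi^{-1})nd+\chi^{-1}\Delta_n^{\cF}(x)$, as claimed.

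The only point needing care is that Lemma \ref{Lem:A1} is applied with exactly the normalized inner products used to define $\sigma_j(x)$. Here the setups coincide verbatim, and the Remark after Lemma \ref{Lem:A1} in any case covers common rescalings. Once the counting identity is written down, the rest is bookkeeping.
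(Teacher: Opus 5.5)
Your proposal is correct and matches the paper's proof step for step. You exchange the sums to get the bound $nd+\chi^{-1}\sum_{j}\log^{+}[\sigma_j(x)]^{-1}$, then apply Lemma \ref{Lem:A1} with $A_k=D_{F^k(x)}F_k$, whose composition is $D_xF^n$, to bound that sum by $nd+\Delta_n^{\cF}(x)$; your explicit check that the inner products coincide is the only detail the paper leaves implicit.
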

\begin{proof}
	For each $s\in \NN$ and each $1\leq j\leq nd$, if $\sigma_{j}(x)\leq e^{-s\chi}\leq 1$, then $s\leq \chi^{-1} \log [\sigma_{j}(x)]^{-1}$.
	Hence,
	$$\#\big\{s\geq 0:~\sigma_{j}(x)\leq e^{-s\chi}\big\}\leq \frac{1}{\chi} \log^{+} \frac{1}{\sigma_{j}(x)}+1.$$
	Therefore,
	\begin{align*}
		\sum_{s=0}^{\infty}~m_x^n(e^{-s\chi})
		&\leq \sum_{j=1}^{nd} \# \big\{s\geq 0:~\sigma_{j}(x)\leq e^{-s\chi}\big\} \\
		&\leq \sum_{j=1}^{nd} \Big(\frac{1}{\chi} \log^{+} \frac{1}{\sigma_{j}(x)}+1\Big) \\
		&=nd+\frac{1}{\chi} \sum_{j=1}^{nd}\log^{+} \frac{1}{\sigma_{j}(x)}.
	\end{align*}
	By Lemma \ref{Lem:A1}, we have 
	$$ \sum_{j=1}^{nd}\log^{+} \frac{1}{\sigma_{j}(x)}\leq \Delta_n^{\cF}(x)+nd,$$
	which gives
	$$\sum_{s=0}^{\infty}~m_x^n(e^{-s\chi})\leq (1+\chi^{-1})\cdot nd+\Delta_n^{\cF}(x)\cdot \chi^{-1}.$$
	This completes the proof.
\end{proof}
Recall the function $\HH(t)=-t\log t-(1-t)\log (1-t)$ defined on $[0,1]$.
For convenience, we define the following function: for $s\in \NN$, $0<\theta<1$, $\lambda>0$, and $0<r<1/4$, let
$$\cH_{s}(\theta,\lambda,r)=\HH\big((re^{-s\lambda})^{1-\theta}\big)+ 
                            d\cdot (re^{-s\lambda})^{1-\theta}\log \big(1+ (re^{-s\lambda})^{-\theta}\big).$$
\begin{Lemma}\label{Lem:sum2}
	For $\lambda>1$ and $0<r<1/4$, we have 
	$$\sum_{s=0}^{\infty}~\cH_{s}(1-\lambda^{-1},\lambda,r)\leq 6d\lambda.$$
\end{Lemma}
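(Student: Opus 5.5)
With $\theta=1-\lambda^{-1}$ we have $1-\theta=\lambda^{-1}$. Put $t_s:=(re^{-s\lambda})^{1-\theta}=r^{1/\lambda}e^{-s}$, so each term is
$$\cH_s=\HH(t_s)+d\,t_s\log\big(1+(re^{-s\lambda})^{-\theta}\big).$$
The plan is to bound the two pieces separately, summing geometric-type series in $s$. The key point is that $t_s$ decays like $e^{-s}$ regardless of $\lambda$. The only $\lambda$-growth comes from the logarithm of $(re^{-s\lambda})^{-\theta}$, which is linear in $s\lambda$ and is killed by the factor $e^{-s}$.

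\textbf{Entropy piece.} Since $r<1/4$ and $\lambda>1$, we have $t_0=r^{1/\lambda}<1$, but $t_0$ may be close to $1$. So for $s=0$ use the crude bound $\HH(t_0)\le\log 2$. For $s\ge1$ we have $t_s\le e^{-1}<1/2$. Use $\HH(t)\le t\log(1/t)+t$, which follows from $-(1-t)\log(1-t)\le t$. Then $\log(1/t_s)=s+\lambda^{-1}\log(1/r)$. The key observation is that $t_s\log(1/t_s)=e^{-s}\,r^{1/\lambda}\big(s+\lambda^{-1}\log(1/r)\big)$ and $\sup_{u\in(0,1)}u\log(1/u)=e^{-1}$. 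Hence $r^{1/\lambda}\lambda^{-1}\log(1/r)\le e^{-1}$, so
$$t_s\log(1/t_s)\le e^{-s}(s+e^{-1}).$$
Summing over $s\ge1$, together with $\sum_s t_s\le\sum_s e^{-s}$, gives an absolute constant, certainly at most $3\le 3d\lambda$.

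\textbf{Log piece.} We have $\log\big(1+(re^{-s\lambda})^{-\theta}\big)\le \log 2+\theta\big(\log(1/r)+s\lambda\big)$, using $(re^{-s\lambda})^{-\theta}\ge1$. Multiply by $t_s=r^{1/\lambda}e^{-s}$. The term $r^{1/\lambda}\log(1/r)=\lambda\cdot r^{1/\lambda}\lambda^{-1}\log(1/r)\le\lambda e^{-1}$, again by the $u\log(1/u)$ bound. The remaining terms are bounded by $e^{-s}(\log2+s\lambda)$. Summing over $s\ge0$, using $\sum e^{-s}=e/(e-1)\approx1.58$ and $\sum se^{-s}=e/(e-1)^2\approx0.92$, yields roughly
$$d\big(1.58\log2+1.58\lambda e^{-1}+0.92\lambda\big)\le d(1.1+1.5\lambda)\le 3d\lambda,$$
since $\lambda>1$.

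\textbf{Conclusion and main obstacle.} Adding the two pieces gives $\sum_s\cH_s\le 6d\lambda$. The only delicate step is the uniformity in $r$ of the factor $r^{1/\lambda}\log(1/r)$ as $r\to0$. This is handled by the substitution $u=r^{1/\lambda}$, which gives the bound $\lambda e^{-1}$ and supplies the linear-in-$\lambda$ growth. The rest is bookkeeping of constants, where there is ample slack.
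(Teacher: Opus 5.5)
Your proof is correct and follows essentially the same route as the paper: substitute $t_s=r^{1/\lambda}e^{-s}$, use $\HH(t)\le t(1-\log t)$, bound the logarithm by a term linear in $s\lambda+\log(1/r)$, control $r^{1/\lambda}\log(1/r)$ uniformly via $u\log(1/u)\le e^{-1}$ with $u=r^{1/\lambda}$, and then sum geometric series. The only cosmetic differences are that the paper uses $\log(1+x)\le\log x+1/x$ where you use $\log(1+x)\le\log x+\log 2$ for $x\ge1$, and your separate treatment of $s=0$ is harmless but unnecessary, since $\HH(t)\le t(1-\log t)$ holds for all $t\in(0,1)$.
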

\begin{proof}
	Let $\theta=1-\lambda^{-1}\in (0,1)$.
	Then, $(re^{-s\lambda})^{1-\theta}=r^{1/\lambda}e^{-s}$
	and $(re^{-s\lambda})^{-\theta}=\big(r^{1/\lambda}e^{-s}\big)^{-(\lambda-1)}$.
	Note that for every $0<t<1$,
	$$\HH(t)=-t\log t-(1-t)\log(1-t)\leq t\big(1-\log t\big).$$
	Moreover,
	\begin{align*}
		\log\left(1+t^{-(\lambda-1)}\right)
		&=-(\lambda-1)\log t+\log\big(1+t^{\lambda-1}\big)\\
		&\leq -(\lambda-1)\log t+t^{\lambda-1}.
	\end{align*}
	Applying these inequalities with $t=r^{1/\lambda}e^{-s}$,
	we obtain
	\begin{align*}
		\mathcal H_s\big(1-\lambda^{-1},\lambda,r\big) \leq
		r^{1/\lambda}e^{-s}+
		\bigl(1+d(\lambda-1)\bigr)r^{1/\lambda}e^{-s}\big(\log r^{-1/\lambda}+s\big)+
		dr e^{-s\lambda}.
	\end{align*}
	Summing over $s\geq0$
	we get
	\begin{align*}
		 & \sum_{s=0}^{\infty}\mathcal H_s \left(1-\lambda^{-1},\lambda,r\right) \\
    \leq~& \frac{r^{1/\lambda}}{1-e^{-1}} + \frac{ \bigl(1+d(\lambda-1)\bigr)r^{1/\lambda}\log(1/r)}{\lambda(1-e^{-1})} + \frac{\bigl(1+d(\lambda-1)\bigr)r^{1/\lambda}e^{-1}}{(1-e^{-1})^2} + \frac{dr}{1-e^{-\lambda}} \\
	\leq~& 2+d\lambda+2d\lambda+\frac{d}{2} \leq 6d\lambda.
	\end{align*}
	This completes the proof.
\end{proof}

\subsection{Proof of Theorem \ref{Thm:key}}
In this subsection, we complete the proof of Theorem \ref{Thm:key}. 
Let $M$ be a compact Riemannian manifold of dimensional $d$.
Fix constants $\alpha\in (0,1]$ and $\Upsilon>1$.
Let $f:M\to M$ be a $\cC^{1,\alpha}$ map satisfying $\|f\|_{\cC^{1,\alpha}}\leq \Upsilon$.

\begin{proof}[Proof of Theorem \ref{Thm:key}]

In order to apply the results established in the previous subsections, we first choose a uniform injectivity radius of $M$.
Since $M$ is  compact, one can choose $\rho_0>0$ and $\varepsilon_0>0$ such that for every $x\in M$
\begin{enumerate}
	\item[$ \bullet$]  $\exp_x: \{v\in T_xM: \|v\|\leq \rho_0 \} \rightarrow M$ is a $C^{\infty}$ embedding;
	\item[$ \bullet$]  $\exp_x(\{v\in T_xM: \|v\|\leq \rho_0 \})\supset B(x,\varepsilon_0)$;
	\item[$ \bullet$]  $\|D_{v}(\exp_x)\|\leq e^{1/2},~\forall \|v\|\leq \rho_0$ and $\|D_y(\exp_x^{-1})\|\leq e^{1/2},\, \forall y\in B(x,\varepsilon_0)$.
\end{enumerate}     
Fix $x\in M$ and $q\geq 2$. 
For each $n\in \NN$, define $\cE=\big\{ \big( E_n, (\cdot,\cdot)_n \big) \big\}_{n=0}^{\infty}$ and $\cF:=\big\{F_n\}_{n=0}^{\infty}$ by
\begin{enumerate}
	\item[(1)] $\big(E_n,(\cdot,\cdot)_n\big)=\big(T_{f^{qn}(x)}M,~(\cdot,\cdot)_{f^{qn}(x)}\big)$;
	\item[(2)] $r_0=\varepsilon_0 \cdot (2\Upsilon)^{-q}$;
	\item[(3)] $F_n=\exp_{f^{q(n+1)}(x)}^{-1} \circ f^q \circ\exp_{f^{qn}(x)}:~\{v\in T_{f^{qn}(x)}M:\|v\|\leq r_0\}\to T_{f^{q(n+1)}(x)}M$.
\end{enumerate}
By $\|f\|_{\cC^{1,\alpha}}\leq \Upsilon$ and $M$ is compact, for each fixed $q\in \NN$, there exists $H_{\alpha,M,q}(\Upsilon)>1$ such that for every $n\geq 0$
\begin{equation}\label{eq:dofH}
	H:=H_{\alpha,M,q}(\Upsilon)\geq \|F_n\|_{\cC^{1,\alpha}},~~\forall n\geq 0
\end{equation}
We now give the condition of $\varepsilon_q(\alpha,\Upsilon)$: set 
\begin{equation}\label{eq:dofepsiq}
    \varepsilon_q(\alpha,\Upsilon)=\frac{1}{4}\cdot \min\Big\{ \big[8He^{q/2}\big]^{-1/[\alpha(1-q^{-1/2})]},~~\varepsilon_0\cdot (8\Upsilon)^{-q},~~4^{-\sqrt{q}}\Big \},~~q\geq 2. 
\end{equation}
Recall the definition of $\Delta_{n}^{\cF}$ and $\Delta_{n}^{f^q}$ in \eqref{eq:dofDF} and \eqref{eq:DofD}. 
Then, by the choice of $r_0$ and the properties of the exponential map, for every $n>0$ and every $y \in B_n(x,r_0,f^q)$, we have
\begin{equation}\label{eq:roftwoD}
	|\Delta_{n}^{\cF}(\exp_x^{-1}{y})-\Delta_{n}^{f^q}(y)|\leq (n+1)d.
\end{equation}

Recall the definition of $\Omega_n$ in \eqref{eq:dofOmega} and two metrics $d_n,\overline{d}_n$ in \eqref{eq:twometric}.
For each $0<\varepsilon\leq \varepsilon_q(\alpha,\Upsilon)$ and each $\Delta_{n}\in \NN$, define
    $$Z:=\Big(\exp_{x}^{-1} \big(B_n(x,\varepsilon,f^q) \big)\Big) \cap \Big\{v\in \Omega_n:~\Delta_{n}^{\cF}(v)\leq \Delta_{n}+(n+1)d+1 \Big\}.$$
Then, we have 
    $$Z\subset \Omega_n,~~\text{and}~~\diam_{\overline{d}_n}(Z)\leq \diam_{d_n}(Z)\leq 4\varepsilon.$$
It follows from the properties of the exponential map and inequality \eqref{eq:roftwoD} that
\begin{equation}
	r_n\Big(B_n(x,\varepsilon,f^q)\cap \big\{x\in M:\Delta^{f^q}_n(x)\in [\Delta_n,\Delta_n+1)\big\},~\delta,f^q\Big)\leq r_n(Z,\delta/2,\cF).
\end{equation}
We now apply Corollary \ref{Cor:Cover-dn}. 
Recall we have fixed an integer $q\geq 2$.
Let $\lambda=q^{1/2}$, $\theta=1-q^{-1/2}$ and $r=4\varepsilon$, then by the choice of $\varepsilon_q(\alpha,\Upsilon)$ (see \eqref{eq:dofepsiq}) we have $4He^{\lambda}r^{\alpha \theta}<1$ and $r^{1-\theta}\leq 1/4$. 
For every $0<\delta<\varepsilon$, choose $L:=L_q(\varepsilon,\delta)\in \NN$ such that  
    $$L= \left\lfloor\frac{\log (8\varepsilon^{\theta}/\delta)}{\sqrt{q}-1} \right\rfloor+1,~~\Longrightarrow~~(e^{-L\lambda}\cdot r)^{\theta}<\delta/2.$$
Recall the definition of $m_z^n$ in \eqref{eq:dofmx} and $Z\subset \{v\in \Omega_n:~\Delta_{n}^{\cF}(v)\leq \Delta_{n}+(n+1)d+1\}$.
Then, by $4He^{\lambda}r^{\alpha \theta}<1$ and Lemma \ref{Lem:sum1} with the choice of parameters
    $$\chi=\alpha \lambda \theta= \alpha (\sqrt{q}-1),$$ 
we have
	$$\sup_{z\in Z}~\sum_{s=0}^{\infty}~m_z^n\big(4He^{\lambda}(re^{-s\lambda})^{\alpha \theta}\big)\leq \sup_{z\in Z}~\sum_{s=0}^{\infty}~m_z^n(e^{-s\chi})\leq nd+\frac{\Delta_n+4nd}{\alpha(\sqrt q-1)}.$$
Therefore, by Lemma \ref{Lem:sum2} and Corollary \ref{Cor:Cover-dn} we have
    $$\log r_n(Z,\delta/2,\cF)\leq nd\Big(20\sqrt{q}+\frac{4\log(1+8e^{\sqrt{q}})}{\alpha(\sqrt{q}-1)}\Big)+ \frac{\log(1+8e^{\sqrt{q}})}{\alpha(\sqrt{q}-1)} \cdot \Delta_{n} +\log(1+8e^{\sqrt{q}}) Ld.$$
Hence, for each $q\geq 2$ if we take 
\begin{equation}\label{eq:dofcq}
	c_q=\frac{\log(1+8e^{\sqrt{q}})}{\sqrt{q}-1},~~~C_q(\alpha)=20\sqrt{q}+\frac{4\log(1+8e^{\sqrt{q}})}{\alpha(\sqrt{q}-1)}
\end{equation}
and
\begin{equation}\label{eq:dofDq}
	D_q(d,\varepsilon,\delta)=d\cdot \log(1+8e^{\sqrt{q}})\cdot \left(\frac{\log \big(8\varepsilon^{1-q^{-1/2}}/\delta\big)}{\sqrt{q}-1}+1\right).
\end{equation}
Then, we have 
\begin{align*}
	\log r_n\Big(B_n(x,\varepsilon,f^q)\cap \big\{x\in M:\Delta^{f^q}_n(x)\in [\Delta_n,\Delta_n+1)\big\},~\delta,f^q\Big)
	&\leq \log r_n(Z,\delta/2,\cF)\\
	\leq&\frac{c_q}{\alpha}\cdot \Delta_n+nd\cdot C_q(\alpha)+D_q(d,\varepsilon,\delta).
\end{align*}
It follows immediately from \eqref{eq:dofcq} that $c_q\to1$ and $C_q(\alpha)/q\to0$ as $q\to \infty$.
Recall that we have assumed $q\geq 2$ throughout the above argument, and hence Theorem \ref{Thm:key} has been proved in this case.
For $q=1$, we fix $\lambda=3$ and $\theta=2/3$, and define $\varepsilon_1(\alpha,\Upsilon)$ according to \eqref{eq:dofepsiq} by taking ``$q=9$''. 
Repeating the above argument with this choice, we obtain the definitions of $\varepsilon_1(\alpha,\Upsilon)$, $c(1)$, $C_1(\alpha)$, and $D_1(d,\varepsilon,\delta)$.
This does not affect the limit as $q\to\infty$.
Therefore, we complete the proof of the theorem.
\end{proof}

\section{Proof of Theorem \ref{Thm:Main-Entropy}}\label{SEC:proof-Mth}
In this section, we complete the proof of Theorem \ref{Thm:Main-Entropy}. 
Once Theorem \ref{Thm:key} has been established, the remaining argument for the main theorem is standard.

\begin{proof}[Proof of Theorem \ref{Thm:Main-Entropy}]
	Fix constants $\alpha\in(0,1]$ and $\Upsilon>1$.
	Let $f:M\to M$ be a $\mathcal{C}^{1,\alpha}$ map of a $d$-dimensional compact manifold $M$ with $\|f\|_{\mathcal C^{1,\alpha}}\le\Upsilon$.
	For every $q>0$, Theorem \ref{Thm:key} provides constants $\varepsilon_q:=\varepsilon_q(\alpha,\Upsilon)$, $c_q$, and $C_q(\alpha)$ satisfying the required properties.
	Let $\mu$ be an $f$-invariant measure, and let $\cQ$ be a finite partition satisfying $\diam \cQ<\varepsilon_q$.
	
	Fix $\gamma>0$. 
	Then, there exists a finite partition $\cP$ with $\mu(\partial \cP)=0$ such that
	\begin{equation}\label{eq:Entropy1}
		h_{\mu}(f^q)\leq h_{\mu}(f^q,\cP)+\gamma.
	\end{equation} 
	Since $\mu(\partial \cP)=0$, there exists $0<\delta<\varepsilon_q$ such that $\mu(B(\partial \cP,2\delta))<(\gamma/ \log \#\cP)^2$.
	For each $n>0$, define
	$$K_n:=\left\{x\in M: \frac{1}{n+1}\#\Big\{0\leq j\leq n:f^{qj}(x)\in B(\partial \cP,2\delta)\Big\}\leq \frac{\gamma}{\log \#\cP}  \right\}.$$
	Then, $\mu(M\setminus K_n)\leq \gamma/(\log \#\cP)$.
	Recall the definition of $\Delta_n^{f^q}$ in \eqref{eq:DofD}. 
	Since $\|f\|_{\cC^{1,\alpha}}\leq \Upsilon$, it follows that $\Delta_n^{f^q}(x)\leq ndq \log^{+} \Upsilon$ for every $x\in M$.
	For each $0\leq k \leq ndq \log^{+} \Upsilon$, define
	$$\Delta_n(k):=\{x\in M:~k\leq \Delta_n^{f^q}(x)<k+1 \}.$$
	Let $\cE_n:=\{\Delta_n(k): 0\leq k \leq ndq \log^{+} \Upsilon\}$ be a finite partition of $M$, and let $\cR_n=\cE_n\vee \{K_n,M\setminus K_n\}$.
	Then, we have
	\begin{equation}\label{eq:Entropy2}
			h_{\mu}(f^q,\cP)=\lim_{n\to \infty} \frac{1}{n}H_{\mu}(\cP^{n,f^q})\leq \lim_{n\to \infty} \frac{1}{n}H_{\mu}(\cQ^{n,f^q})+\limsup_{n\to \infty} \frac{1}{n}H_{\mu}(\cP^{n,f^q}|\cQ^{n,f^q}).
	\end{equation}
	and, by $\# \cR_n\leq 2(ndq \log^{+} \Upsilon+1)$,
	\begin{align*}
		H_{\mu}(\cP^{n,f^q}|\cQ^{n,f^q})
		&\leq H_{\mu}\big(\cP^{n,f^q}|\cQ^{n,f^q}\vee \cR_n \big)+\log \# \cR_n \\ 
		&\leq H_{\mu}\big(\cP^{n,f^q}|\cQ^{n,f^q}\vee \cR_n \big)+\log~(2ndq \log^{+} \Upsilon+2).
	\end{align*}
	By $\mu(M\setminus K_n)\leq \gamma/(\log \#\cP)$ and $\log \#\cP^{n,f^q}\leq (n+1)\log \#\cP$, we have 
	\begin{align*}
		    ~~ &H_{\mu}(\cP^{n,f^q}|\cQ^{n,f^q}\vee \cR_n) \\
		\leq~~ &\sum_{E_n\in \cE_n\vee \cQ^{n,f^q}} \mu(E_n\cap K_n) \cdot  \log \#\{P_n\in \cP^{n,f^q}:P_n\cap E_n \cap K_n \neq \emptyset\}+\mu(M\setminus K_n) \log \#\cP^{n,f^q} \\
		\leq~~ &\sum_{E_n\in \cE_n\vee \cQ^{n,f^q}} \mu(E_n\cap K_n) \cdot  \log \#\{P_n\in \cP^{n,f^q}:P_n\cap E_n \cap K_n \neq \emptyset\}+(n+1)\gamma.
	\end{align*}
	By Theorem \ref{Thm:key}, for each $E_n=Q_n\cap \Delta_n(k)$ with $Q_n\in \cQ^{n,f^q}$ and $0\leq k \leq ndq \log^{+} \Upsilon$, there exist $Z^n_1,Z^n_2,\cdots,Z^n_{N_n}$ such that for every $1\leq i\leq N_n$
	$$ \forall 0 \leq j\leq n,~\diam(f^{qj}(Z^n_i))\leq \delta,~~\bigcup_{i=1}^{N_n} Z^n_i=E_n\cap K_n,~~\log N_n\leq \frac{c_q}{\alpha}\cdot k+nd\cdot C_q(\alpha)+D_q(d,\varepsilon,\delta).$$
	Therefore, for each $1\leq i\leq N_n$, we have
	$$ \# \{P_n\in \cP^{n,f^q}:~P_n \cap Z_i^n\neq \emptyset\}\leq \prod_{j=0}^{n} \#\{P\in \cP:~f^{qj}(Z_i^n)\cap P\neq \emptyset\}.$$
	Fix a point $z\in Z^n_i$, if $\#\{P\in \cP:~f^{qj}(Z_i^n)\cap P\neq \emptyset\}>1$, then by $\diam(f^{qj}(Z^n_i))\leq \delta$ we must have $f^{qj}(z)\in B(\partial\cP,2\delta)$. 
	Hence, by $Z^n_i\subset K_n$ we have
	$$ \# \{P_n\in \cP^{n,f^q}:P_n \cap Z_i^n\neq \emptyset\}\leq (\# \cP)^{\#\{0\leq i\leq n:~f^{qi}(z)\in B(\partial\cP,2\delta)\}}\leq e^{(n+1)\gamma}.$$
	Hence, for each $E_n=Q_n\cap \Delta_n(k)$ with $Q_n\in \cQ^{n,f^q}$ and $0\leq k \leq ndq \log^{+} \Upsilon$, we have
	$$ \log \# \{P_n\in \cP^{n,f^q}:P_n\cap E_n \cap K_n \neq \emptyset\} \leq \frac{c_q}{\alpha} \cdot k + nd \cdot C_q(\alpha) + D_q(d,\varepsilon,\delta) + (n+1)\gamma.$$
	Therefore,
	\begin{align*}
		      H_{\mu}(\cP^{n,f^q}|\cQ^{n,f^q}\vee \cR_n)
		&\leq \sum_{k}~\mu(\Delta_n(k)) \left[ \frac{c_q}{\alpha} \cdot k + nd \cdot C_q(\alpha) + D_q(d,\varepsilon,\delta) + (n+1)\gamma \right]+(n+1)\gamma \\
		&\leq \frac{c_q}{\alpha} \int \Delta_{n}^{f^q}(x)~{\rm d} \mu(x) + nd \cdot C_q(\alpha) + D_q(d,\varepsilon,\delta) + 2(n+1)\gamma.
	\end{align*}
	By letting $n\to\infty$ and using the definition of $\Delta_{n}^{f^q}(x)$ (see \eqref{eq:DofD}), together with the Birkhoff ergodic theorem and the Oseledets theorem \cite{Ose68} (see \eqref{eq:positveLE}), we obtain
	$$\limsup_{n\to \infty} \frac{1}{n}H_{\mu}(\cP^{n,f^q}|\cQ^{n,f^q})\leq \frac{c_q}{\alpha} \left(\int \max_{1\leq k\leq d}\log^{+}~ \|\wedge^k D_xf^q\|~{\rm d}\mu(x)-\lambda^{+}_{\Sigma}(\mu,f^q) \right)+ dC_q(\alpha)+2\gamma.$$
	Together with \eqref{eq:Entropy1} and \eqref{eq:Entropy2} we have
	$$h_{\mu}(f^q)\leq h_{\mu}(f^q,\cQ)+\frac{c_q}{\alpha} \left(\int \max_{1\leq k\leq d}\log^{+}~ \|\wedge^k D_xf^q\|~{\rm d}\mu(x)-\lambda^{+}_{\Sigma}(\mu,f^q)\right) + dC_q(\alpha)+3\gamma.$$
	Dividing both sides by $q$, by $h_{\mu}(f^q)=qh_{\mu}(f)$, $\lambda^{+}_{\Sigma}(\mu,f^q)=q\lambda^{+}_{\Sigma}(\mu,f)$, 
	$$h_{\mu}(f^q,\cQ)\leq q h_{\mu}(f,\cQ),~~\frac{1}{q}\int \max_{1\leq k\leq d}\log^{+}~ \|\wedge^k D_xf^q\|~{\rm d}\mu(x)-\lambda^{+}_{\Sigma}(\mu,f)\leq 2d \Upsilon$$
	and the fact that $\gamma>0$ is arbitrary, we obtain
	$$h_{\mu}(f)\leq h_{\mu}(f,\cQ)+\frac{1}{\alpha} \left(\frac{1}{q}\int \max_{1\leq k\leq d}\log^{+}~ \|\wedge^k D_xf^q\|~{\rm d}\mu(x)-\lambda^{+}_{\Sigma}(\mu,f)\right) + \frac{dC_q(\alpha)}{q}+\frac{|c_q-1|2d \Upsilon}{\alpha}.$$
	By Theorem \ref{Thm:key}, we have $C_q(\alpha)/q\to 0$ and $c_q\to 1$ as $q\to \infty$. Let 
	$$\delta_q:=\delta_{q}(\alpha,d,\Upsilon):= d\cdot \left(\frac{C_q(\alpha)}{q}+\frac{2|c_q-1|\Upsilon}{\alpha}\right).$$
	Then, $\delta_q\to 0$ as $q\to \infty$, and, for every $q\in \NN$, every $f$-invariant measure $\mu$ and every finite partition $\cQ$ with $\diam \cQ<\varepsilon_q$, we have
	$$h_{\mu}(f)\leq h_{\mu}(f,\cQ)+\frac{1}{\alpha} \left(\frac{1}{q}\int \max_{1\leq k\leq d}\log^{+}~ \|\wedge^k D_xf^q\|~{\rm d}\mu(x)-\lambda^{+}_{\Sigma}(\mu,f)\right) + \delta_q.$$
	This completes the proof.
\end{proof}

\section{Proof of Theorem \ref{Thm:Main-USC}}
We now turn to the proof of Theorem \ref{Thm:Main-USC}.
The proof of Theorem \ref{Thm:Main-USC} is essentially a direct application of Theorem \ref{Thm:Main-Entropy}.
\begin{proof}[Proof of Theorem \ref{Thm:Main-USC}]
	Under the assumptions of Theorem \ref{Thm:Main-USC}, let $\{f_n\}$ be a sequence of maps converging to $f$ in the $\cC^{1,\alpha}$ topology. 
	Choose $\Upsilon>1$ such that $\|f\|_{\cC^{1,\alpha}}<\Upsilon$ and $\|f_n\|_{\cC^{1,\alpha}}<\Upsilon$ for every $n\geq 0$.
	For convenience, for each $q\in \NN$ we denote 
	$$q(g,\nu):=\frac{1}{q}\int \max_{1\leq k\leq d}~\log^{+} ~ \|\wedge^k D_xg^q\|~{\rm d}\nu(x),\quad ~g\in \{f,~f_1,\cdots\},~\nu \in \{\mu,~\mu_1,\cdots\}.$$
	Note that $q(f,\mu) \to \lambda_{\Sigma}^{+}(\mu,f)$ as $q\to \infty$.
	Given $\varepsilon>0$, we first choose $q>0$ such that
	$$\delta_q(\alpha,d,\Upsilon)\leq \varepsilon~~\text{and}~~~q(f,\mu)-\lambda_{\Sigma}^{+}(\mu,f)<\alpha\varepsilon,$$
	where $\delta_q(\alpha,d,\Upsilon)$ is the constant given by Theorem \ref{Thm:Main-Entropy}.
	Let $\varepsilon_q>0$ be the finite constant given by Theorem \ref{Thm:Main-Entropy}, and let $\cQ$ be a finite partition whose boundary is piecewise smooth, such that $\diam \cQ<\varepsilon_q$ and $\mu(\partial \cQ)=0$.
	Then, by Theorem \ref{Thm:Main-Entropy}, for each $n>0$ we have 
	$$h_{\mu_n}(f_n)\leq h_{\mu_n}(f_n,\cQ)+\frac{1}{\alpha}\Big(\frac{1}{q}\int \max_{1\leq k\leq d}~\log^{+} ~ \|\wedge^k D_xf_n^q\|~{\rm d}\mu_n(x)-\lambda^{+}_{\Sigma}(\mu_n,f_n)\Big)+\varepsilon.$$
	Letting $n\to\infty$, we have $q(f_n,\mu_n)\to q(f,\mu)$. 
	Since $\mu(\partial\cQ)=0$, it follows that
	$$\limsup_{n\to\infty} h_{\mu_n}(f_n,\cQ)\leq h_{\mu}(f,\cQ).$$
	Therefore, by Theorem \ref{Thm:Main-Entropy} and  the choice of $q$
	\begin{align*}
		\limsup_{n \rightarrow\infty} h_{\mu_n}(f_n)-h_{\mu}(f)&\leq \limsup_{n \rightarrow\infty} h_{\mu_n}(f_n)-h_{\mu}(f,\cQ) \\
		&\leq \alpha^{-1}\Big(q(f,\mu)-\liminf_{n \to\infty} \lambda^{+}_{\Sigma}(\mu_n,f_n)\Big) +\varepsilon \\
	\big(\text{by}~q(f,\mu)\leq \lambda^{+}_{\Sigma}(\mu,f)+\alpha \varepsilon\big)~~~~~&\leq \alpha^{-1}\Big(\limsup_{n\to \infty} \big[\lambda^{+}_{\Sigma}(\mu,f)- \lambda^{+}_{\Sigma}(\mu_n,f_n)\big]\Big) +2\varepsilon.
	\end{align*}
	Since $\varepsilon>0$ was arbitrary, this completes the proof of Theorem \ref{Thm:Main-USC}.
\end{proof}

\appendix
\section{Entropy structure}\label{SEC:A}
In this section, we recall the definition of entropy structures and the statement of the symbolic extension theorem. 
The notations presented here is mainly based on \cite[Section 8]{Dow11} and \cite[Section 6]{Dow05}.
Throughout this section, we assume that $T:X\to X$ is a continuous map on a compact metric space $X$ with $h_{\rm top}(T)<\infty$.

We denote by $\PP_{T}(X)$ the set of all $T$-invariant measures.
Fix a decreasing sequence $\{\varepsilon_n\}_{n\geq1}$ converging to zero, define a sequence of functions $\{h_n(\cdot)\}_{n>0}$ on $\mathbb{P}_{T}(X)$ by
\begin{equation}\label{eq:dofES}
	h_n(\mu):=\inf\left\{ h_{\mu}(T,\mathcal Q): \diam(\cQ)<\varepsilon_n \right\},~~\forall \mu\in\mathbb{P}_{T}(X),
\end{equation}
where the infimum is taken over all finite Borel partitions $\mathcal Q$ of $X$.
The goal of this section is to show that $\mathcal H:=\{h_n\}$ is a entropy structure defined in \cite{Dow05} and \cite{Dow11}.
We start with the definition of entropy structures.

Let $\cH_1=\{h^1_n(\cdot)\}_{n>0}$ and $\cH_2=\{h^2_k(\cdot)\}_{k>0}$ be two increasing nets ($h^i_n(\cdot)\leq  h^i_{n+1}(\cdot)$ for $i=1,2$) of nonnegative functions defined on $\PP_{T}(X)$.
We call that $\cH_1$ is \textit{uniformly equivalent} to $\cH_2$, if for every $\gamma>0$ and every $k>0$, there exists $\ell:=\ell_{\gamma,k}>0$ such that $h^1_{\ell}(\mu)>h^2_{k}(\mu)-\gamma$ and $h^2_{\ell}(\mu)>h^1_{k}(\mu)-\gamma$ for every $\mu\in \PP_T(X)$.

An entropy structure is an equivalence class of increasing nets of functions on $\mathbb{P}_{T}(X)$ under the uniform equivalence relation above. 
In \cite[Section 8.3.1]{Dow11}, Downarowicz introduced three types of entropy structures: the \textit{Romagnoli entropy structure} $\cH^{\rm R}$, the \textit{continuous functions entropy structure} $\cH^{\rm F}$, and the \textit{Newhouse entropy structure} $\cH^{\rm N}$.
In \cite[Section 8.4]{Dow11}, he proved that these three entropy structures are uniformly equivalent. Hence, an entropy structure can be viewed as the equivalence class of uniformly equivalent increasing nets of nonnegative functions defined on $\mathbb{P}_{T}(X)$ containing these three increasing nets:
$\mathcal H^{\rm R}$, $\mathcal H^{\rm F}$, and $\mathcal H^{\rm N}$.
In other words, to show that the sequence of functions $\mathcal H:=\{h_n\}$ in \eqref{eq:dofES} forms an entropy structure, it suffices to prove that $\mathcal H$ is uniformly equivalent to $\mathcal H^{\rm F}$.

We next recall the definition of the continuous function entropy structure $\mathcal H^{\rm F}$. 
This definition can be found in \cite[Section 6.2]{Dow05} and \cite[Definition 8.3.11]{Dow11}.
Let $f:X \to [0,1]$ be a continuous function. 
There are two-element partition $\cP_f$ of $X\times [0,1]$
$$\cP_f:=\big\{ \{(x,t)\in X \times [0,1]:f(x)\geq t\},~\{(x,t)\in X \times [0,1]:f(x)<t\} \big\}.$$
For a finite family $\cF$ of such functions, we let $\cP^{\cF}:=\bigvee_{f\in \cF} \cP_{f}$.
Let $m$ denotes the Lebesgue measure on the interval $[0,1]$.
For each $\mu \in \PP_T(X)$, we define 
$$h_{\mu}(T,\cF):=h_{\mu \times m}(T\times \Id,\cP^{\cF}).$$
Assume that $\{\mathcal F_n\}_{n\geq1}$ is a sequence of finite families of continuous functions taking values in $[0,1]$, such that
$\mathcal F_n\subset \mathcal F_{n+1}$ and $\bigvee_{n=1}^{\infty}\mathcal P^{\mathcal F_n}$ is the partition into singletons of $X\times (0,1)$.
Then, the function $\{h_n^{F}\}_{n\geq 1}$ forms a continuous-function entropy structure $\mathcal H^{\rm F}$, where
$$h_n^{F}(\mu)=h_{\mu}(T,\cF_n).$$

\begin{Proposition}\label{Prop:AP}
	For every decreasing sequence $\{\varepsilon_n\}_{n\geq1}$ converging to zero, the increasing net of nonnegative functions
	$\mathcal H=\{h_n\}_{n\geq1}$ defined in \eqref{eq:dofES} is uniformly equivalent to the continuous function entropy structure $\mathcal H^F$.
\end{Proposition}
\begin{proof}
	We first construct such a family of continuous functions.
	Choose a sequence of subsets $\{A_n\}_{n\geq 1}$ of $X$ such that each $A_n$ is a finite set and $\bigcup_{x\in A_n}B(x,\varepsilon_n/8)=X$.
	By the Urysohn lemma, for each $x\in A_n$ there is a continuous function $f_x$ taking values in $[0,1]$ such that   
	$$f_x(y)=1,~\forall y\in M~\text{with}~d(x,y)\leq \frac{\varepsilon_n}{8}~~\text{and}~~f_x(y)=0,~\forall y\in M~\text{with}~d(x,y)> \frac{\varepsilon_n}{4}.$$
    Let $\{q_1,q_2,\ldots\}$ be an enumeration of the rational numbers in $[0,1]$, and define the function $g_i\equiv q_i$ on $X$.
	Define $\mathcal F_1:=\{f_x:x\in A_1\}\cup \{g_1\}$ and
	$\mathcal F_n:=\{g_n\} \cup \{f_x:x\in A_n\}\cup \mathcal F_{n-1}$ for $n\geq2$.
	It is clear that $\mathcal F_n\subset\mathcal F_{n+1}$ and that
	$\bigvee_{n=1}^{\infty}\mathcal P^{\mathcal F_n}$ is the partition into singletons of $X\times(0,1)$.
	We define the continuous function entropy structure $\cH^F$ as above.

	Define $\cP_f^t:=\big\{ \{x :f(x)\geq t\},~\{x:f(x)<t\} \big\}$ and $\cP^{n,t}=\bigvee_{f\in \cF_n} \cP_{f}^{t}$.
	Then $\mathcal P_f^t$ and $\mathcal P^{n,t}$ are finite partitions of $X$, where $t\in[0,1]$ and $f\in\mathcal F_n$.
	By the definition of $h_{\mu}(T,\cF_n)$, we have
	$$h^F_n(\mu)=h_{\mu}(T,\cF_n)=\int_{0}^{1} h_{\mu}(T,\cP^{n,t})~{\rm d}t,~~\forall \mu\in \PP_T(X).$$
	For each $t>0$, we claim that $\diam (\cP^{n,t})< \varepsilon_n$.	
	Indeed, for every $y \in P_n^t\in \cP^{n,t}$, there exists $x \in A_n$ such that $d(x,y)\leq \varepsilon_n/8$.
	Then, by the definition of $\cP^{n,t}$ we have $P_n^t\subset \{z:f_x(z)\geq t\}\subset \{z:d(x,z)\leq \varepsilon_n/4\}$.
	Therefore, for every $P_n^t\in \cP^{n,t}$ we have $\diam (P_n^t)\leq \varepsilon_n/2$.
	Hence, for every $\mu\in \PP_{T}(X)$ we have
	$$h^F_n(\mu)=\int_{0}^{1} h_{\mu}(T,\cP^{n,t})~{\rm d}t \geq \int_{0}^{1} \inf\left\{ h_{\mu}(T,\mathcal Q): \diam(\cQ)<\varepsilon_n \right\}~{\rm d}t=h_n(\mu).$$
	This establishes one side of the inequality. 
	We next to prove the other side.
	
	Fix $\gamma>0$ and an integer $n>0$. 
	Since $\cF_n$ is finite, there exists $\delta>0$ such that
	$$d(x,y)\leq \delta~~\Longrightarrow~~\#\cF_n\cdot |f(x)-f(y)|< \gamma/ (\log 2).$$
	Let $\cQ$ be a finite partition of $X$ such that $\diam(\cQ)<\delta$.
	Then, for every $\mu\in \PP_{T}(X)$ we have 
	\begin{align*}
		h^F_n(\mu)=\int_{0}^{1} h_{\mu}(T,\cP^{n,t})~{\rm d}t&\leq \int_{0}^{1} h_{\mu}(T,\cQ)+ H_{\mu}(\cP^{n,t} |\cQ)~{\rm d}t,\\
		&\leq \int_{0}^{1} h_{\mu}(T,\cQ)~{\rm d}t+ \sum_{Q\in \cQ} \mu(Q)\cdot \int_{0}^{1} H_{\mu_Q}(\cP^{n,t})~{\rm d}t,
	\end{align*}
	where $\mu_{Q}:=\mu(Q\cap \cdot)/\mu(Q)$ whenever $\mu(Q)>0$.
	For each $Q\in \cQ$ and $f\in \cF_n$, since $\diam(Q)<\delta$, there are $0\leq a_{f,Q}\leq b_{f,Q}\leq 1$ such that $a_{f,Q}\leq f(z)\leq b_{f,Q}$ for every $z\in Q$.
	Note that, whenever $t<a_{f,Q}$ or $t>b_{f,Q}$,
	we have $H_{\mu_{Q}}(\cP^{t}_{f})=0$. 
	Therefore, 
	\begin{align*}
		\sum_{Q\in \cQ} \mu(Q)\cdot \int_{0}^{1} H_{\mu_Q}(\cP^{n,t})~{\rm d}t&\leq \sum_{Q\in \cQ}\mu(Q)\cdot \sum_{f\in \cF_n}  \int_{a_{f,Q}}^{b_{f,Q}} H_{\mu_{Q}}(\cP^{t}_{f})~{\rm d}t \\
		&\leq \sum_{Q\in \cQ} \mu(Q)\cdot \sum_{f\in \cF_n}(b_{f,Q}-a_{f,Q})\log 2<\gamma
	\end{align*}
	Since $\cQ$ is an arbitrary finite partition satisfying $\diam \cQ<\delta$, for every $\mu\in \PP_{T}(X)$ we have 
	$$h^F_n(\mu)\leq \inf\left\{ h_{\mu}(T,\mathcal Q): \diam(\cQ)<\delta \right\}+\gamma.$$
	Choose $\ell:=\ell_{n,\gamma}>0$ such that $\varepsilon_{\ell}<\delta$ and $\ell\geq n$. 
    Then, we have $h_{\ell}>h_{n}^F-\gamma$, and by the other inequality proved above, $h_{\ell}^F>h_{n}-\gamma$.
	This completes the proof of the proposition.
    \end{proof}
	Finally, we recall the symbolic extension theorem in the form stated for continuous maps in \cite[Theorem 3]{Bur12}; see also \cite[Theorem 5.5]{BD04} and \cite[Chapter 9]{Dow11}.
	
	\begin{Theorem}[Symbolic extension theorem] \label{Thm:A.1}
		Let $T:X\to X$ be a continuous map with finite topological entropy.
		Assume that $\lambda(\cdot)$ is a non-negative affine upper semi-continuous function defined on $\PP_{T}(X)$, such that for every $\gamma>0$, every entropy structure $\{h_n\}$, and every $\mu\in \PP_T(X)$, there exist $k_{\mu}>0$ such that 
		$$\limsup_{\nu~\text{ergodic}~\to \mu} \Big(h_{\nu}(T)-h_{k}(\nu)+\lambda(\nu)\Big)\leq \lambda(\mu)+\gamma,~~\forall k\geq k_{\mu}.$$
		Then, there exists a symbolic extension $\pi:(\Sigma,\sigma)\to (X,T)$ such that 
		$$\max\big\{h_{\nu}(\sigma): \nu~\text{is}~\sigma\text{-invariant and}~\pi_{\ast}(\nu)=\mu \big\}=h_{\mu}(T)+\lambda(\mu).$$
		In particular, 
		$$\inf\big\{h_{\rm top}(\sigma):(\Sigma,\sigma)~\text{is a symbolic extension of}~(X,T)\big\}\leq h_{\rm top}(T)+\sup\big\{\lambda(\mu):\mu\in \PP_{T}(X)\big\}.$$
	\end{Theorem}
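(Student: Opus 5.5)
The statement to prove is the symbolic extension theorem in the form of Theorem \ref{Thm:A.1}. My plan is to derive it from the abstract theory of Boyle--Downarowicz rather than re-prove it from scratch. The guiding principle is that symbolic extensions are governed entirely by \emph{superenvelopes} of an entropy structure.

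The key input is the Symbolic Extension Entropy Theorem \cite[Theorem 5.5]{BD04}; see also \cite[Chapter 9]{Dow11}. For a system with finite topological entropy it says the following. A function $E:\PP_T(X)\to[0,\infty)$ equals the extension entropy function $h^\pi_{\rm ext}(\mu)=\max\{h_\nu(\sigma):\pi_*\nu=\mu\}$ of some symbolic extension $\pi$ if and only if $E$ is an affine superenvelope of the entropy structure.

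Here a superenvelope of $\{h_k\}$ means an upper semicontinuous $E\ge h$ such that, for every $k$, the functions $E-(h-h_k)$ are upper semicontinuous, and converge to zero defect in the sense that $\lim_k \widetilde{(E-h+h_k)}-(E-h+h_k)=0$ uniformly. Equivalently, for each $\mu$ one has $\limsup_{\nu\to\mu}(E-h+h_k)(\nu)\to E(\mu)$ as $k\to\infty$.

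Since the entropy structure is defined only up to uniform equivalence, and the superenvelope property is invariant under uniform equivalence \cite{Dow05}, it suffices to verify the property for one structure. I would use the hypothesis in exactly this role.

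The first step is to set $E:=h+\lambda$, where $h(\mu)=h_\mu(T)$. Then $E$ is affine, because both the entropy map and $\lambda$ are affine.

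The second step is to show that $E$ is a superenvelope. Fix $\mu$ and $\gamma>0$. The hypothesis gives
$$\limsup_{\nu\to\mu}(h-h_k+\lambda)(\nu)\le \lambda(\mu)+\gamma\quad\text{for all } k\ge k_\mu.$$
Adding $h_k(\nu)$ back in, I would combine two facts. First, $h_k$ is upper semicontinuous for the structures in question; for the Romagnoli or function structures each $h_k$ is u.s.c., which is why one passes through $\mathcal H^F$ via Proposition \ref{Prop:AP}. Second, $h_k\le h$. Together these give
$$\limsup_{\nu\to\mu}E(\nu)\le h_k(\mu)+\lambda(\mu)+\gamma\le E(\mu)+\gamma.$$
Since $\gamma$ is arbitrary, $E$ is u.s.c. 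Applying the same estimate with $h-h_k+\lambda=E-h_k$ yields the required vanishing of the defect $\widetilde{E-h_k}-(E-h_k)$ as $k\to\infty$, locally at each $\mu$.

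One point needs care. The hypothesis is stated only along \emph{ergodic} $\nu\to\mu$, while the superenvelope condition involves all invariant $\nu$. I would bridge this with the standard result \cite[Chapter 8]{Dow11} that for affine data the u.s.c. envelope can be computed along ergodic measures through the harmonic extension. Upper semicontinuity of $\lambda$ is what allows the pointwise bounds to be upgraded to the uniform convergence required in the definition.

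The third step invokes the theorem of Boyle--Downarowicz, which yields $\pi$ with $h^\pi_{\rm ext}=E=h+\lambda$. This is the displayed equality. The bound on the infimum then follows from the symbolic variational principle $h_{\rm top}(\sigma)=\sup_\mu h^\pi_{\rm ext}(\mu)$.

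The main obstacle is the passage from the local, ergodic-only limsup condition to the superenvelope property as defined in \cite{BD04}. I would handle it by citing \cite[Theorem 3]{Bur12}, which is proved exactly in this form for continuous maps, and by checking that our $h_k$ fit that framework through Proposition \ref{Prop:AP}.
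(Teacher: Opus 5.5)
Your proposal lands where the paper does: Theorem~\ref{Thm:A.1} is not proved in the paper but quoted from \cite[Theorem 3]{Bur12} (cf.\ \cite[Theorem 5.5]{BD04} and \cite[Chapter 9]{Dow11}), and you settle the one nontrivial step, passing from the ergodic-only $\limsup$ hypothesis to the superenvelope property, by citing \cite[Theorem 3]{Bur12} as well. Note, however, that the sketch before that citation misstates the superenvelope condition. The condition is $\lim_k\widetilde{(E-h_k)}=E-h$ pointwise, not a statement about $E-(h-h_k)$ or about convergence to $E(\mu)$. No uniform convergence is required, and upper semicontinuity of $\lambda$ would not provide it anyway, since a Dini-type argument needs a lower semicontinuous limit. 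So the citation, not the sketch, is what carries the proof.
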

	By the uniform equivalence relation, it suffices to verify the assumptions of Theorem \ref{Thm:A.1} for one particular entropy structure.
	Therefore, Theorem \ref{Thm:1.1} is an immediate consequence of Theorem \ref{Thm:A.1} and Proposition \ref{Prop:AP}.

\section*{Acknowledgements}
The authors gratefully acknowledge the support of the Tianyuan Mathematical Center in Northeast China.

\vskip 5pt

\flushleft{\bf Chiyi Luo} \\
\small School of Mathematics and Statistics, Jiangxi Normal University, Nanchang,   330022, P. R. China\\
\textit{E-mail:} \texttt{luochiyi98@gmail.com}\\

\flushleft{\bf Dawei Yang} \\
\small School of Mathematical Sciences,  Soochow University, Suzhou, 215006, P.R. China\\
\textit{E-mail:} \texttt{yangdw@suda.edu.cn}\\
\end{document}